\def\multiset#1#2{\ensuremath{\left(\kern-.3em\left(\genfrac{}{}{0pt}{}{#1}{#2}\right)\kern-.3em\right)}}
\newcommand{\where}{\;\ifnum\currentgrouptype=16 \middle\fi|\;}
\newcommand{\C}{\mathbb{C}}
\newcommand{\R}{\mathbb{R}}
\newcommand{\N}{\mathbb{N}}
\newcommand{\Z}{\mathbb{Z}}
\newtheorem{theorem}{Theorem}
\newtheorem{lemma}[theorem]{Lemma}
\newtheorem{proposition}[theorem]{Proposition}
\newtheorem{corollary}[theorem]{Corollary}
\theoremstyle{definition}
\theoremstyle{remark}
\theoremstyle{definition}
\title{Linear Operators, the Hurwitz Zeta Function and Dirichlet $L$-Functions }
\author{Bernardo Bianco Prado \& Kim Klinger-Logan}
\date{September 2019}
\begin{document}

\maketitle

\begin{quote}{\sc Abstract:} At the 1900 International Congress of Mathematicians, Hilbert claimed that the Riemann zeta function is not the solution of any algebraic ordinary differential equation its region of analyticity \cite{HilbertProb}. In 2015, Van Gorder addresses the question of whether the Riemann zeta function satisfies a {\it non}-algebraic differential equation and constructs a differential equation of infinite order which zeta satisfies \cite{RHequiv}.  However, as he notes in the paper, this representation is formal and Van Gorder does not attempt to claim a region or type of convergence. In this paper, we show that Van Gorder's operator applied to the zeta function does not converge pointwise at any point in the complex plane. We also investigate the accuracy of truncations of Van Gorder's operator applied to the zeta function and show that a similar operator applied to zeta and other $L$-functions does converge. Note that this version differs from the published version in Section 4.1 and 4.2.  
\\ \end{quote}

\section{Introduction}

In Hilbert's 1900 address at the International Congress of Mathematicians, he claimed that the Riemann zeta function
is not the solution of any algebraic ordinary differential equation
on its region of analyticity \cite{HilbertProb}. In \cite{VanGorder}, Van Gorder addresses the question of whether the Riemann zeta function satisfies a {\it non}-algebraic differential equation. As Van Gorder notes in the introduction of \cite{VanGorder}, it could be the case that $\zeta(z)$ satisfies a nonlinear differential equation or that it satisfies a linear differential equation of infinite order.\footnote{In fact, in \cite{GT}, Gauthier and Tarkhanov show that $\zeta(s)$ does satisify an inhomogeneous linear differential equation.  However, this equation is not algebraic.}  In \cite{VanGorder}, Van Gorder constructs a differential equation of infinite order that the Riemann zeta function satisfies \cite{RHequiv}.  However, as he notes in the paper, this representation is clearly formal and Van Gorder does not attempt to claim a region or type of convergence.\footnote{Though he does allude to some important things to be considered in Section 2 of \cite{VanGorder}.}

In what follows we will examine the region of convergence for the differential equation in question. We will also extend the formal identity appearing in Van Gorder's work to see that the Hurwitz zeta function satisfies a similar differential equation.  

 In Section \ref{T} we will begin with a brief overview of the differential operator introduced by Van Gorder.  In Sections \ref{titID} and \ref{hurDE} we will extend Van Gorder's main results to show that the Hurwitz zeta function formally satisfies a similar infinite order differential equation to the one in \cite{VanGorder}.  These results subsume those of Van Gorder.  In Section \ref{conv} we will address the issue of where such equations converge.  We will show that, in fact, the differential equation under investigation in \cite{VanGorder} diverges everywhere.  

We will see through the course of Section \ref{Hurwitz} that the formal arguments given by Van Gorder rely on a non-global characterization of his operator $T$ that only holds away from poles of the function on which it is being applied.  If we define a new operator $G$ in terms of this characterization globally, we can guarantee convergence. However, this new operator $G$ is not a {\it differential} operator and furthermore does not converge to Van Gorder's operator $T$. We will investigate this new operator $G$ in Section \ref{secG}.  

In Section \ref{secL} we will extend Van Gorder's argument to yield an operator equation involving $G$ applied to Dirichlet $L$-functions. We will see that the inverse $T^{-1}$ that Van Gorder presents in \cite{VanGorder} actually yields $G^{-1}$. In Section \ref{Bern} we make precise Van Gorder's claim that this inverse operator has a connection to the Bernoulli numbers, and in Section \ref{Ginvcon} we will use this connection to give identities for the Hurwitz zeta function and Dirichlet $L$-function and discuss the convergence of $G^{-1}$. 

In Section \ref{approx}, we examine the truncated version of the operator $T$.  Though $T$ does not converge when applied to $\zeta$, it is possible that some truncation of $T$ applied to $\zeta$ will provide a good approximation of Van Gorder's differential equation. 

\section{Van Gorder's operator applied to the Hurwitz Zeta Function}\label{Hurwitz}

\subsection{Van Gorder's Operator}\label{T}

The differential operator defined by VanGorder in \cite{VanGorder} is given by:
\begin{equation}
    T = \sum_{n=0}^\infty L_n
\end{equation}
where
\begin{align*}
    L_n &:= p_n(s) \exp(nD)\\
    p_n(s) &:= \begin{cases}
    1& \text{ if } n = 0\\
    \frac{1}{(n+1)!}\prod_{j = 0}^{n-1} (s+j) &\text{ if } n >0 \end{cases}\\
    \exp(nD) &:= id + \sum_{k=1}^\infty \frac{n^k}{k!} D_s^k
\end{align*}
for $D_s^k := \frac{\partial ^k}{\partial s^k}$. For an overview of infinite order differential equations see Charmichael's \cite{Charmichael} and for more recent applications involving infinite order differential equations with initial conditions see \cite{BarnabyK}.

Van Gorder notes that $\exp(nD)$ acts as a shift operator for meromorphic functions in the sense that $\exp(nD)u(s) = u(s+n)$ {\it sufficiently far away from poles}. However, he does not attempt to answer the question of precisely what is ``sufficiently far away from poles" but instead references Ritt's \cite{Ritt}.  As we will see in Section \ref{HurCon}, the operator that Ritt considers, $\exp(D)$, (though of infinite order)  is simpler than Van Gorder's $T=\sum_{n=0}^\infty p_n(s) \exp(nD)$.  Thus more work is necessary to address the convergence of $T$ than is done by Ritt \cite{Ritt}.

In \cite{VanGorder}, Van Gorder proves that \begin{equation}\label{zetaDE}T[\zeta(s)-1]=\frac{1}{s-1}\end{equation} formally.
The crux of the proof relies upon the characterization of $\exp(nD)$ as the ``shift operator".  In the following two sections, we prove that the Hurwitz zeta function satisfies a similar equation

   \begin{equation}\label{HurDE}
    T\left[\zeta (s,a) - \frac{1}{a^s}\right] = \frac{1}{(s-1)a^{s-1}}.\end{equation}
 Our argument is akin to that of Van Gorder's. 
 
 It is important to note that in the proof of Corollary \ref{Diffeq} we are assuming that $\exp(nD)u(s) = u(s+n)$ when claiming
\begin{equation}\label{L_n}
L_n \left[ \zeta (s, a) - \frac{1}{a^s} \right] = p_n(s)\left(\zeta (s+n, a) - \frac{1}{a^{s+n}}\right)
\end{equation}
This assumption is also made at a similar place in \cite{VanGorder}.
 However since this is only true ``sufficiently far away from the poles" of $u$, this leads to the natural question of {\it where} (\ref{zetaDE}) and  (\ref{HurDE}) hold.  We will begin to address this question by examining the convergence of the differential operator $T$ in Section \ref{conv}. 

\subsection{A useful identity for the Hurwitz Zeta Function}\label{titID}

In order to show that $\zeta$ formally solves the differential equation (\ref{zetaDE}), Van Gorder uses the following identity
\begin{equation}\label{zetaID1}\zeta(s) = \frac{s}{s-1} -\sum_{n=1}^{\infty}\frac{\prod_{j=0}^{n-1}(s+j)}{(n+1)!}(\zeta(s+n) -1)\end{equation}
 which can be found in \cite{Apostol} and \cite{Titchmarsh1986}.
 Following the argument of Titchmarsh \cite{Titchmarsh1986}, we need to generalize the identity to the Hurwitz zeta function.

\begin{lemma}\label{id1}
Let $\zeta (s, a)$ be the Hurwitz zeta function. Then, for $\text{Re}(s) > 2$ and $0 < a \leq 1$, we have that 
\begin{equation}\label{HurID1} \zeta (s, a) - \frac{1}{(s-1)a^{s-1}} = \frac{1}{a^{s}} - \sum_{n = 1}^{\infty} \frac{\prod_{j = 0}^{n-1} (s+j)}{(n+1)!}\left(\zeta (s+n,a) - \frac{1}{a^{s+n}} \right)\end{equation}
\end{lemma}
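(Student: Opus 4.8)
The plan is to evaluate the infinite series on the right of (\ref{HurID1}) in closed form and check that it reproduces the left-hand side. The starting observation is the elementary shift $\zeta(s,a) = a^{-s} + \zeta(s,a+1)$, so that each factor appearing in the sum satisfies $\zeta(s+n,a) - a^{-(s+n)} = \zeta(s+n,a+1) = \sum_{m=1}^\infty (m+a)^{-(s+n)}$. Substituting this into each summand, and using the same shift on the left side, reduces the claim to the equivalent identity
\[
\zeta(s,a+1) = \frac{1}{(s-1)a^{s-1}} - \sum_{n=1}^\infty \frac{(s)_n}{(n+1)!}\,\zeta(s+n,a+1),
\]
where I abbreviate the rising factorial $(s)_n := \prod_{j=0}^{n-1}(s+j)$. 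Expanding each $\zeta(s+n,a+1)$ as the sum over $m\ge 1$ turns the right-hand series into a double sum over $n\ge 1$ and $m\ge 1$ of $\frac{(s)_n}{(n+1)!}(m+a)^{-(s+n)}$.

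The central step is to interchange the two summations and carry out the sum over $n$ first, after factoring out $(m+a)^{-s}$, leaving the inner sum $\sum_{n\ge 1}\frac{(s)_n}{(n+1)!}\,(m+a)^{-n}$, a binomial-type series at $x = (m+a)^{-1}$. Because $m\ge 1$ and $0<a\le 1$ force $0 < x < 1$, this series converges; this is precisely why the $m=0$ term (where $x = 1/a \ge 1$) had to be split off as $a^{-s}$ at the outset. To obtain the closed form I start from $(1-t)^{-s} = \sum_{n\ge 0}\frac{(s)_n}{n!}t^n$ and integrate term by term over $[0,x]$, which gives $\sum_{n\ge 0}\frac{(s)_n}{(n+1)!}x^{n+1} = \frac{(1-x)^{1-s}-1}{s-1}$; dividing by $x$ and removing the $n=0$ term yields $\sum_{n\ge 1}\frac{(s)_n}{(n+1)!}x^{n} = \frac{(1-x)^{1-s}-1}{(s-1)x} - 1$.

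Substituting $x=(m+a)^{-1}$ and using $(1-x)^{1-s} = (m+a-1)^{1-s}(m+a)^{s-1}$, each term of the $m$-series collapses to
\[
\frac{(m+a-1)^{1-s} - (m+a)^{1-s}}{s-1} - (m+a)^{-s}.
\]
Summing over $m\ge 1$, the first piece telescopes to $\frac{a^{1-s}}{s-1} = \frac{1}{(s-1)a^{s-1}}$, since the tail $(m+a)^{1-s}\to 0$ as $\text{Re}(s)>1$, while the second piece is exactly $\zeta(s,a+1)$. Hence the whole right-hand series equals $\frac{1}{(s-1)a^{s-1}} - \zeta(s,a+1)$, which rearranges to the reduced identity and therefore to (\ref{HurID1}).

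I expect the main obstacle to be the rigorous justification of the term-by-term integration and, above all, the interchange of the double summation. Both are licensed by absolute convergence: the coefficients $\frac{|(s)_n|}{(n+1)!}$ grow only polynomially in $n$, whereas the factors $(m+a)^{-n}$ decay geometrically and uniformly, since $m+a\ge 1+a>1$; thus the double sum converges absolutely for $\text{Re}(s)>2$, validating Fubini and the rearrangement, while the weaker condition $\text{Re}(s)>1$ separately ensures the telescoping tail vanishes. Once absolute convergence is secured, the algebraic simplification and the telescoping are routine.
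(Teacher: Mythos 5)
Your proposal is correct and follows essentially the same route as the paper's proof: both expand the right-hand side as a double series, justify interchanging the order of summation by absolute convergence (Fubini), evaluate the inner sum over $n$ in closed form via the generalized binomial series, and then telescope over the remaining index, with both arguments arriving at the identical per-term expression $\frac{(m+a-1)^{1-s}-(m+a)^{1-s}}{s-1}-(m+a)^{-s}$. Your preliminary shift $\zeta(s,a)=a^{-s}+\zeta(s,a+1)$ and your derivation of the binomial identity by term-by-term integration (rather than quoting the series for $\left(1-x\right)^{-(s-1)}$ directly, as the paper does) are only cosmetic variations on the paper's presentation.
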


\begin{proof}
Let $s \in \C$ satisfy $\text{Re}(s) > 2$. We can then write the Hurwitz zeta as a series and it suffices to show that the series
\begin{equation}\label{flipseries}
    \sum_{k=1}^\infty\sum_{n = 1}^{\infty}   \frac{\prod_{j = 0}^{n-1} (s+j)}{(n+1)!}\left( \frac{1}{(k+a)^{s+n}} \right) 
\end{equation}
converges absolutely pointwise to $\frac{1}{(s-1)a^{s-1}} + \frac{1}{a^s} - \zeta(s,a)$, since this will mean that we can interchange the order of summation by Fubini's Theorem. To see why we get such result, observe that from geometric series we have that for an integer $k \geq 0$,
\begin{align}
    \left( \frac{k+a}{k+a-1} \right)^{s-1}  &= \left( \frac{1}{1-\frac{1}{k+a}} \right)^{s-1} \notag\\
    &= \sum_{n=0}^\infty \frac{\prod_{j = 0}^{n - 1}(s-1+j)}{n!}\frac{1}{(k+a)^n} \notag \\
    &= \sum_{n=0}^\infty \frac{\prod_{j = -1}^{n - 2}(s+j)}{n!}\frac{1}{(k+a)^n} \label{geopower}
\end{align}
where the last series is absolutely convergent since by the triangle inequality and  geometric series, we have
\begin{align*}
    \sum_{n=0}^\infty \left|\frac{\prod_{j = -1}^{n - 2}(s+j)}{n!}\frac{1}{(k+a)^n} \right| &\leq \sum_{n=0}^\infty \frac{\prod_{j = -1}^{n - 2}(|s|+j)}{n!}\frac{1}{(k+a)^n} = \left( \frac{k+a}{k+a-1} \right)^{|s|-1}
\end{align*}
By absolute convergence, $\frac{1}{(k+a)^{s-1}(s-1)}\sum_{n=2}^\infty \frac{\prod_{j = -1}^{n - 2}(s+j)}{n!}\frac{1}{(k+a)^n} $ is precisely the $k^{th}$ term of the left summation of (\ref{flipseries}). But this is the same as the expression $\frac{1}{(k+a)^{s-1}(s-1)}\left[ \left( \frac{k+a}{k+a-1} \right)^{s-1}  - 1-\frac{s-1}{k+a}\right]$ and we have that
\begin{align*}
     \frac{1}{(s-1)a^{s-1}} + \frac{1}{a^s} - \zeta(s,a) &= \frac{1}{(s-1)a^{s-1}} - \sum_{k=1}^\infty \frac{1}{(k+a)^s} \\
    &= \frac{1}{s-1}\sum_{k=0}^\infty \frac{1}{(k+a)^{s-1}} - \sum_{k=1}^\infty \frac{1}{(k+a)^s} - \frac{1}{s-1}\sum_{k=1}^\infty \frac{1}{(k+a)^{s-1}}
\end{align*}
Since these three series converge absolutely, we can re-index the leftmost series and get that this is equal to an absolutely convergent series given by
\begin{align*}
    &\sum_{k=1}^\infty \left( \frac{1}{(s-1)(k-1+a)^{s-1}} - \frac{1}{(s-1)(k+a)^{s-1}} - \frac{1}{(k+a)^s} \right)\\ &= \sum_{k=1}^\infty \frac{1}{s-1}\cdot \frac{1}{(k+a)^{s-1}}
    \left[ \left( \frac{k+a}{k+a-1} \right)^{s-1} -1 - \frac{s-1}{k+a} \right]
\end{align*}
Which is an absolutely convergent series and gives the desired result.
\end{proof}

In a more elegant way, we can express this identity in terms of the $\Gamma$ function using the fact that $\frac{\Gamma(s+n)}{\Gamma(s)} = \prod_{j = 0}^{n-1} (s+j)$ for $s \in \C, n\in \N$. Equation (\ref{id1}) then becomes:

\begin{equation}\label{Gamma}\zeta (s, a) - \frac{1}{(s-1)a^{s-1}} = \frac{1}{a^{s}} - \sum_{n = 1}^{\infty} \frac{\Gamma(s+n)}{(n+1)!\Gamma(s)}\left(\zeta (s+n,a) - \frac{1}{a^{s+n}} \right)\end{equation}

We now show that this identity holds for all $s\in \C$.

\begin{lemma}\label{absconv}
The right-hand side of equation (\ref{HurID1}) in Lemma 1 converges absolutely for all $s \in \C$.
\end{lemma}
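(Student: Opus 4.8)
The plan is to discard the constant term $\tfrac{1}{a^s}$ and prove absolute convergence of the series $\sum_{n=1}^\infty t_n$, where $t_n := c_n(s)\bigl(\zeta(s+n,a)-a^{-(s+n)}\bigr)$ and $c_n(s):=\tfrac{\prod_{j=0}^{n-1}(s+j)}{(n+1)!}$. Since absolute convergence is a property of the tail of a series, I would fix $s\in\C$ and choose $N$ with $\mathrm{Re}(s)+N>1$; for every $n\ge N$ the Dirichlet series for the Hurwitz factor converges and we may write $\zeta(s+n,a)-a^{-(s+n)}=\sum_{k=1}^\infty (k+a)^{-(s+n)}$. The finitely many initial terms are each finite, so they contribute nothing to the convergence question, and the whole argument reduces to estimating $t_n$ for large $n$.

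Next I would bound the $n$-th term by splitting off its two factors. By the triangle inequality $|c_n(s)|\le \tfrac{\prod_{j=0}^{n-1}(|s|+j)}{(n+1)!}=\tfrac{\Gamma(|s|+n)}{\Gamma(|s|)(n+1)!}$, while $\bigl|\zeta(s+n,a)-a^{-(s+n)}\bigr|\le \sum_{k=1}^\infty (k+a)^{-(\sigma+n)}$ with $\sigma=\mathrm{Re}(s)$. Factoring the dominant $k=1$ term out of the last sum, $\sum_{k=1}^\infty (k+a)^{-(\sigma+n)}=(1+a)^{-(\sigma+n)}\sum_{k=1}^\infty\bigl(\tfrac{1+a}{k+a}\bigr)^{\sigma+n}$, and for $n$ large the remaining sum is bounded by a constant independent of $n$, since each ratio $\tfrac{1+a}{k+a}<1$ for $k\ge2$. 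Hence $\bigl|\zeta(s+n,a)-a^{-(s+n)}\bigr|\le C\,(1+a)^{-n}$ for a constant $C=C(s,a)$.

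Combining the two bounds, $t_n$ is $O\!\bigl(\tfrac{\Gamma(|s|+n)}{(n+1)!}(1+a)^{-n}\bigr)$, and Stirling's formula gives $\tfrac{\Gamma(|s|+n)}{(n+1)!}=\tfrac{\Gamma(|s|+n)}{\Gamma(n+2)}\sim n^{|s|-2}$, i.e. only polynomial growth. Since $0<a\le1$ forces $1+a>1$, the geometric factor $(1+a)^{-n}$ dominates any polynomial, so the terms are summable and the series converges absolutely; equivalently, the ratio test yields $\lim_{n\to\infty}\tfrac{|t_{n+1}|}{|t_n|}=\tfrac{1}{1+a}<1$.

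The one point requiring care—and the main obstacle—is the phrase ``for all $s\in\C$'' in the presence of the simple pole of $\zeta(w,a)$ at $w=1$: when $s=1-n_0$ is a non-positive integer the summand $t_{n_0}$ is a $0\cdot\infty$ indeterminacy. This is harmless, because the simple zero of $c_{n_0}(s)$ at $s=1-n_0$ (coming from the factor $s+n_0-1$) cancels the simple pole of $\zeta(s+n_0,a)$, giving a removable singularity, while every coefficient $c_n(s)$ with $n>n_0$ also contains the vanishing factor $s+n_0-1$ and hence equals $0$. Thus at a non-positive integer the series in fact terminates and absolute convergence is immediate, whereas at every other $s$ the tail estimate above applies verbatim.
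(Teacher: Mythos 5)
Your proof is correct and takes essentially the same approach as the paper's: reduce to the tail $n \ge N$ where $\mathrm{Re}(s+n)>1$, bound the coefficient by $\prod_{j=0}^{n-1}(|s|+j)/(n+1)!$, bound the Hurwitz factor by a constant times $(1+a)^{-n}$, and handle the points $s=1-n_0$ via the zero--pole cancellation that also makes the series terminate. The only differences are technical---you obtain the geometric bound by factoring out the $k=1$ term where the paper uses an integral-test estimate, and you conclude via Stirling and the ratio test where the paper uses the limit comparison test---and these do not change the substance of the argument.
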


\begin{proof}
We first need to treat a delicate point. That is, equation (\ref{HurID1}) is well-defined when $s + n = 1$ for some integer $n \geq 0$. Namely, for such $n$, it makes sense to have the $n^{th}$ term of the sum be $\frac{\prod_{j=0}^{n-1}(s+j)}{(n+1)!}\left(\zeta(s+n,a) - \frac{1}{a^{s+n}}\right)$. Since $(s+n-1)\zeta(s+n,a)$ cancels the pole of the Hurwitz $\zeta$ at $s+n$ and $s+n-1$ appears as the last term in $\prod_{j=0}^{n-1}(s+j)$, the series is well-defined.

To show convergence, let $s \in \C$ and let $N > 0$ be an integer so that Re$(s+N) > 1$. It suffices to show that $$ \sum_{n = N}^{\infty} \frac{\prod_{j = 0}^{n-1} (s+j)}{(n+1)!}\left(\zeta (s+n,a) - \frac{1}{a^{s+n}} \right)$$ converges absolutely. First, we bound $\left|\zeta (s+n,a) - \frac{1}{a^{s+n}}\right|$. Since, $n \geq N$, we have that $\text{Re}(s+n) \geq \text{Re}(s+N) > 1$. By the triangle inequality and by the integral inequality for non-negative series, we can write
\begin{align*}
    \left|\zeta (s+n,a) - \frac{1}{a^{s+n}}\right| &= \left| \sum_{k=1}^\infty \frac{1}{(k+a)^{s+n}} \right| \\
    &\leq \sum_{k=1}^\infty \left|\frac{1}{(k+a)^{s+n}}\right| \\
    &= \sum_{k=1}^\infty \frac{1}{(k+a)^{\sigma+n}} \\
    &\leq \frac{1}{(1+a)^{\sigma+n}}+  \int_{1}^\infty \frac{1}{(x+a)^{\sigma+n}} dx \\
    &= \frac{1}{(1+a)^{\sigma+n}} + \frac{1}{\sigma+n-1}\cdot \frac{1}{(1+a)^{\sigma +n}} \\
    &= \frac{\sigma+n}{\sigma+n-1}\cdot \frac{1}{(1+a)^{\sigma+n}} \leq \frac{\sigma+n}{\sigma+n-1}\cdot \frac{1}{2^{\sigma+n}}
\end{align*}
In addition, observe that $\left| \prod_{j = 0}^{n-1} (s+j)  \right| \leq  \prod_{j = 0}^{n-1} (|s|+j)$. We then have that 
\begin{align*}
    \sum_{n = N}^{\infty} \left|\frac{\prod_{j = 0}^{n-1} (s+j)}{(n+1)!}\left(\zeta (s+n,a) - \frac{1}{a^{s+n}} \right)\right| \leq \sum_{n = N}^{\infty} \frac{\prod_{j = 0}^{n-1} (|s|+j)}{(n+1)!}\cdot \frac{\sigma+n}{\sigma+n-1}\cdot \frac{1}{2^{\sigma+n}}
\end{align*}
Now since  $\sum_{n = N}^{\infty} \frac{\prod_{j = 0}^{n-1} (|s|+j)}{(n+1)!}\cdot \frac{1}{2^{|s| +n}}$ converges and 
\begin{align*}
    \lim_{n\to \infty} \frac{2^{|s|+n}(\sigma+n)}{2^{\sigma+n}(\sigma+n-1)} = 2^{|s|-\sigma}
\end{align*} by the Limit Comparison test, $\sum_{n = N}^{\infty} \frac{\prod_{j = 0}^{n-1} (|s|+j)}{(n+1)!}\cdot \frac{\sigma+n}{\sigma+n-1}\cdot \frac{1}{2^{\sigma+n}}$ must also converge.  Thus, $ \sum_{n = N}^{\infty} \frac{\prod_{j = 0}^{n-1} (s+j)}{(n+1)!}\left(\zeta (s+n,a) - \frac{1}{a^{s+n}} \right)$ converges absolutely.

\end{proof}

Since our series converges absolutely for all $s \in \C$, we must have that our identity in Lemma 1 actually holds for all $s \in \C\setminus\{1\}$. Thus, we have the following corollary.

\begin{corollary}\label{cor}
For all $s \in \C\setminus\{1\}$, we have the following identity $$\zeta (s, a) = \frac{1}{(s-1)a^{s-1}} + \frac{1}{a^{s}} - \sum_{n = 1}^{\infty} \frac{\prod_{j = 0}^{n-1} (s+j)}{(n+1)!}\left(\zeta (s+n,a) - \frac{1}{a^{s+n}} \right)$$
\end{corollary}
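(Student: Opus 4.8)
The plan is to prove the corollary by analytic continuation, using Lemma \ref{id1} to pin the identity down on a half-plane and Lemma \ref{absconv} to guarantee that both sides extend to holomorphic functions on $\C\setminus\{1\}$. Write $F(s) := \zeta(s,a) - \frac{1}{(s-1)a^{s-1}}$ for the left-hand side and $G(s) := \frac{1}{a^s} - \sum_{n=1}^\infty \frac{\prod_{j=0}^{n-1}(s+j)}{(n+1)!}\bigl(\zeta(s+n,a) - \frac{1}{a^{s+n}}\bigr)$ for the right-hand side. By Lemma \ref{id1}, we already know $F(s) = G(s)$ for every $s$ with $\text{Re}(s) > 2$, a nonempty open subset of $\C$, so it suffices to show that both $F$ and $G$ are holomorphic on $\C\setminus\{1\}$ and then invoke the identity theorem.

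First I would check that $F$ is holomorphic on $\C\setminus\{1\}$: the Hurwitz zeta function $\zeta(s,a)$ is meromorphic in $s$ with its only singularity a simple pole at $s=1$, while $\frac{1}{(s-1)a^{s-1}}$ is holomorphic away from $s=1$, so their difference is holomorphic on $\C\setminus\{1\}$. (The two simple poles at $s=1$ in fact have matching residues and cancel, so $F$ even extends holomorphically across $s=1$, but we do not need this.) Next I would show that $G$ is entire. As already observed at the start of the proof of Lemma \ref{absconv}, each summand $\frac{\prod_{j=0}^{n-1}(s+j)}{(n+1)!}\bigl(\zeta(s+n,a) - \frac{1}{a^{s+n}}\bigr)$ is entire, because the only pole of $\zeta(s+n,a)$, located at $s=1-n$, is cancelled by the factor $s+n-1$ occurring in $\prod_{j=0}^{n-1}(s+j)$, and the term $\frac{1}{a^s}$ is entire as well.

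It therefore remains to promote the pointwise absolute convergence of Lemma \ref{absconv} to local uniform convergence, so that the theorem on locally uniform limits of holomorphic functions (Weierstrass) applies. This is the main technical point. I would fix a compact set $K\subset\C$, choose $M$ with $|s|\le M$ and $\text{Re}(s)\ge -M$ for all $s\in K$, and pick $N$ with $\text{Re}(s+N) > 1$ uniformly on $K$. The majorant produced in Lemma \ref{absconv}, namely the tail of $\sum_{n\ge N}\frac{\prod_{j=0}^{n-1}(|s|+j)}{(n+1)!}\cdot\frac{\sigma+n}{\sigma+n-1}\cdot\frac{1}{2^{\sigma+n}}$ with $\sigma=\text{Re}(s)$, is increasing in $|s|$ and controlled from above by its value at the worst-case $\sigma=-M$; hence on $K$ it is dominated termwise by the convergent numerical series $\sum_{n\ge N}\frac{\prod_{j=0}^{n-1}(M+j)}{(n+1)!}\cdot\frac{n-M}{n-M-1}\cdot 2^{M-n}$, whose convergence follows from the same limit comparison used in Lemma \ref{absconv}. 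Since this bound is independent of $s\in K$, the series for $G$ converges uniformly on $K$; as $K$ was arbitrary, $G$ is entire.

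Finally, with $F$ holomorphic on the connected open set $\C\setminus\{1\}$, with $G$ entire (hence holomorphic there), and with $F=G$ on the open half-plane $\text{Re}(s)>2$ — a set whose points are accumulation points lying in $\C\setminus\{1\}$ — the identity theorem for holomorphic functions forces $F=G$ throughout $\C\setminus\{1\}$. Rearranging $F(s)=G(s)$ gives the claimed formula. I expect the only genuine obstacle to be the upgrade from pointwise to local uniform convergence of the defining series of $G$; once the holomorphy of both sides is secured, the conclusion is a routine application of the identity theorem.
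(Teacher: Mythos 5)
Your proof is correct and follows essentially the same route as the paper: the paper likewise deduces the corollary from Lemma \ref{id1} (the identity on $\text{Re}(s)>2$) together with the absolute convergence established in Lemma \ref{absconv}, asserting in one line that this lets the identity extend to all of $\C\setminus\{1\}$. Your write-up simply supplies the details that the paper leaves implicit --- the upgrade from pointwise to locally uniform convergence and the explicit appeal to the identity theorem --- which is exactly the intended analytic continuation argument.
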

\qed

Following Van Gorder's use of equation (\ref{zetaID1}) in \cite{VanGorder}, we will use equation (\ref{HurID1}) to show that (\ref{HurDE}) holds formally.

\subsection{The Hurwitz zeta function formally satisfies a differential equation}\label{hurDE}



    





Now we will show that the Hurwitz zeta function formally satisfies the differential equation (\ref{HurDE}).  This result is a generalization of Theorem 3.1 from \cite{VanGorder}. 
%

\vspace{.5cm}

\begin{corollary}\label{Diffeq}
    Let $T$ be as defined above. Then $\zeta (s, a)$ formally satisfies the differential equation 
$$
    T\left[\zeta (s,a) - \frac{1}{a^s}\right] = \frac{1}{(s-1)a^{s-1}}
$$
for $s \in \C$ satisfying $s + n \neq 1$ for all $n \in \Z_{\geq 0}$.
\end{corollary}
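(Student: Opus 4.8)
The plan is to take the definition $T=\sum_{n=0}^\infty L_n$ at face value and apply it termwise to $\zeta(s,a)-\tfrac{1}{a^s}$, invoking the shift-operator assumption (\ref{L_n}) that was flagged as the formal step. Under that assumption each summand becomes $L_n[\zeta(s,a)-\tfrac{1}{a^s}] = p_n(s)\left(\zeta(s+n,a)-\tfrac{1}{a^{s+n}}\right)$, so that
$$
T\left[\zeta(s,a)-\frac{1}{a^s}\right] = \sum_{n=0}^\infty p_n(s)\left(\zeta(s+n,a)-\frac{1}{a^{s+n}}\right).
$$
The first thing I would do is isolate the $n=0$ term. Since $p_0(s)=1$, this term is simply $\zeta(s,a)-\tfrac{1}{a^s}$, and the remaining sum runs over $n\geq 1$ with coefficients $p_n(s)=\tfrac{1}{(n+1)!}\prod_{j=0}^{n-1}(s+j)$.

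The key observation is that these coefficients are \emph{exactly} the coefficients appearing in the identity of Corollary \ref{cor}. I would therefore rewrite Corollary \ref{cor} in the rearranged form
$$
\sum_{n=1}^\infty \frac{\prod_{j=0}^{n-1}(s+j)}{(n+1)!}\left(\zeta(s+n,a)-\frac{1}{a^{s+n}}\right) = \frac{1}{a^s}-\zeta(s,a)+\frac{1}{(s-1)a^{s-1}},
$$
and substitute it into the displayed expansion of $T$. After substitution the $\zeta(s,a)$ terms cancel, the $\tfrac{1}{a^s}$ terms cancel, and precisely $\tfrac{1}{(s-1)a^{s-1}}$ survives, which is the claimed right-hand side. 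This reduces the whole corollary to an algebraic cancellation once the identity of Corollary \ref{cor} and the shift-operator assumption are in hand.

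The genuinely delicate points are not the algebra but the hypotheses under which the manipulation is legitimate. First, the rearrangement and recombination of the series require the absolute convergence already furnished by Lemma \ref{absconv}, so I would cite that to justify treating the $n\geq 1$ tail as a convergent object rather than a purely formal one. Second, the restriction $s+n\neq 1$ for all $n\in\Z_{\geq 0}$ is exactly the condition ensuring every term $p_n(s)\left(\zeta(s+n,a)-\tfrac{1}{a^{s+n}}\right)$ is well-defined, the pole of $\zeta(\,\cdot\,,a)$ at $s+n=1$ being cancelled by the factor $(s+n-1)$ inside $\prod_{j=0}^{n-1}(s+j)$, precisely as noted in the proof of Lemma \ref{absconv}. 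The main obstacle, and the reason the result is only \emph{formal}, is the assumption $\exp(nD)u(s)=u(s+n)$ underlying (\ref{L_n}): this shift interpretation holds only away from poles, so I would state explicitly that the equality is asserted under that assumption and defer the question of genuine (pointwise) convergence of $T$ to Section \ref{conv}, rather than attempting to establish it here.
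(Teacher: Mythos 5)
Your proposal is correct and takes essentially the same route as the paper's own proof: apply the shift-operator assumption (\ref{L_n}) termwise, split off the $n=0$ term using $p_0(s)=1$, and invoke the rearranged identity of Corollary \ref{cor} so that the $\zeta(s,a)$ and $\frac{1}{a^s}$ terms cancel, leaving $\frac{1}{(s-1)a^{s-1}}$. Your additional remarks on absolute convergence (Lemma \ref{absconv}), the role of the condition $s+n\neq 1$, and the purely formal status of the shift interpretation match the paper's surrounding discussion rather than diverging from it.
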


\begin{proof}
Using equation (\ref{L_n})
$$L_n \left[ \zeta (s, a) - \frac{1}{a^s} \right] = p_n(s)\left(\zeta (s+n, a) - \frac{1}{a^{s+n}}\right)
$$
we have
\begin{align*}
    T\left[ \zeta (s, a) - \frac{1}{a^s} \right] &= \sum_{n=0}^\infty L_n \left[ \zeta (s, a) - \frac{1}{a^s} \right]\\
    &= \sum_{n=0}^\infty p_n(s)\left(\zeta (s+n, a) - \frac{1}{a^{s+n}}\right)\\
    &= \zeta (s, a) - \frac{1}{a^{s}} + \sum_{n=1}^\infty \frac{\prod_{j = 0}^{n-1} (s+j)}{(n+1)!} \left(\zeta (s+n, a) - \frac{1}{a^{s+n}}\right)\\
    &= \frac{1}{(s-1)a^{s-1}}
\end{align*}
\end{proof}


\subsection{Convergence}\label{conv}
In this section we will show that $T$ applied to the Hurwitz zeta function does not converge. However, for certain analytic functions $f$, we see in Section \ref{p_n} that $Tf$ does converge.

\subsubsection{Convergence of $T$ when applied to the Hurwitz Zeta-Function}\label{HurCon}
As Van Gorder notes on page 781 of \cite{VanGorder}, ``we must exercise some caution when working with infinite
order differential equations if we are concerned with convergence of the operators
near poles of the functions being operated upon." As a basis for this concern the author alludes to \cite{Ritt} where Ritt establishes formally that $(\exp(D) - z)\Gamma(z) = 0$.  Ritt notes that $\Gamma$ does not satisfy this differential equation on all of $\C$ but away from the infinitely many poles of $\Gamma$. Of course, in the case of the Riemann zeta function and Hurwitz zeta function, we only have one pole to be concerned about.  As Van Gorder states, the operator $\exp(D)$ is only valid ``outside of a neighborhood of the pole at $z=1$." Our goal is to investigate {\it which} neighborhood and examine its effect on the convergence of the operator $T$ as it is applied to zeta functions.  In what follows we will consider $T$ applied to the Hurwitz zeta function since (when $a=1$) it also covers the case of the Riemann zeta function.

 Recall that in the proof of Corollary \ref{Diffeq}, our use of equation (\ref{L_n}) relies upon the characterization of $\exp$ as the shift operator $\exp(nD)[\zeta(s,a)] = \zeta(s+n,a)$ for $s \in \C$ and $0<a \leq 1$ away from the poles of $\zeta$. This characterization comes from the Taylor series expansion for $\zeta$; thus, we must consider the radius of convergence of the Taylor series when considering when this characterization holds.
 
   The critical observation is that this operator is, \textit{formally}, the Taylor series about a point $s \in \C$ evaluated at $z\in \C$. Namely, the formal Taylor series is
\begin{align}\label{Taylor}
    \zeta(z,a)- \frac{1}{a^z} = \sum_{k = 0}^\infty \frac{D_s^k \left( \zeta(s,a) - \frac{1}{a^s}\right)}{k!}(z-s)^k
\end{align} which, at the point $s+n$ for $n \in \Z_{\geq 0}$, will {formally} satisfy
\begin{align}
    \zeta(s+n,a) - \frac{1}{a^{s+n}} = \sum_{k = 0}^\infty \frac{D_s^k \left( \zeta(s,a) - \frac{1}{a^s}\right)}{k!}n^k = \exp(nD)[\zeta(s,a)]
\end{align}
Since $\zeta(z,a)$ has a pole at $z=1$, these series converge pointwise for $|z-s|<|s-1|$ and $|(s+n)-s|=|n|<|s-1|$.


    We claim that this operator applied to the function $\zeta(s,a)- \frac{1}{a^s}$ does not converge pointwise {\it anywhere}. More explicitly for any $s \in \C$, the sequence of partial sums of the series $T\left[\zeta(s,a) - \frac{1}{a^s}\right]$ is not a well-defined sequence of complex numbers. This comes from the fact that, to have a well-defined a complex-valued series, we need, first, a sequence of complex numbers $(z_n)_{n= 0}^\infty$ so we can define the sequence of partial sums $S_N=\sum_{n =0}^N z_n$ which is, again, a sequence of complex numbers. Then, if the sequence of partial sums converges to a complex number $S$, we write $\sum_{n =0}^{\infty} z_n = S$. What we will show now is that, for any $s \in \C$, the definition of the operator $T$ evaluated at $\zeta(s,a)- \frac{1}{a^s}$ fails this first step by failing to make a sequence of complex numbers.

\begin{proposition}\label{taylordiv}
For any $s \in \C$, we can find some $N \geq 0$ so that the series $$\exp(ND)\left[\zeta(s,a) - \frac{1}{a^s}\right]=\sum_{k = 0}^\infty \frac{D_s^k \left( \zeta(s,a) - \frac{1}{a^s}\right)}{k!}N^k $$ diverges.
\end{proposition}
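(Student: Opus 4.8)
The plan is to recognize the series in the statement as the Taylor expansion of a meromorphic function centered at $s$, whose radius of convergence is dictated entirely by the location of the nearest pole, and then to choose $N$ large enough to overshoot that radius.

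First I would set $g(z) := \zeta(z,a) - \frac{1}{a^z}$ and record that, since $\frac{1}{a^z} = e^{-z\log a}$ is entire for $0 < a \leq 1$, the only singularity of $g$ in the plane is the simple pole of $\zeta(z,a)$ at $z = 1$. As in (\ref{Taylor}), the coefficients $\frac{D_s^k g(s)}{k!}$ are exactly the Taylor coefficients of $g$ about $s$ (well-defined precisely for $s \neq 1$), and the series in the proposition is this Taylor series evaluated at the increment $z - s = N$, i.e.\ at a point lying at distance $N$ from the center $s$ since $N \in \Z_{\geq 0}$.

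Next I would invoke the standard fact (Cauchy--Hadamard together with the identity theorem) that the radius of convergence of the Taylor series of $g$ about $s$ equals the distance from $s$ to the nearest singularity of $g$. Because that singularity is the pole at $z = 1$, the radius is exactly $R = |s-1|$; equivalently $\limsup_{k\to\infty}\left|\frac{D_s^k g(s)}{k!}\right|^{1/k} = \frac{1}{|s-1|}$. Then, for $s \neq 1$, I would choose any integer $N > |s-1|$, for instance $N = \floor{|s-1|} + 1$. With this choice the root test gives $\limsup_{k\to\infty}\left|\frac{D_s^k g(s)}{k!}N^k\right|^{1/k} = \frac{N}{|s-1|} > 1$, so the general term fails to tend to $0$ and the series diverges. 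The remaining case $s = 1$ is immediate: the coefficients are not even finite there, so the series is ill-defined, hence divergent, for every $N$.

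The main obstacle is the single genuinely analytic input, namely that the radius of convergence is \emph{exactly} $|s-1|$ rather than merely at least $|s-1|$. This is what forces divergence once $N$ exceeds $|s-1|$, and it rests on the pole at $z = 1$ being an honest obstruction to analytic continuation. This is clear since $\zeta(z,a)$ has a simple pole with residue $1$ at $z = 1$ and subtracting the entire function $a^{-z}$ cannot cancel it, so $g$ genuinely fails to be holomorphic at $z = 1$. Everything else reduces to the elementary observation that a power series diverges once the evaluation point lies strictly outside its disk of convergence.
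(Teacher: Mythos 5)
Your proof is correct and takes essentially the same route as the paper's: both identify the series as the Taylor expansion of $\zeta(z,a) - a^{-z}$ about $s$, use the pole at $z=1$ to bound the radius of convergence by $|s-1|$, choose an integer $N$ exceeding $|s-1|$ so that the evaluation point lies strictly outside the disk of convergence, and treat $s=1$ separately as the ill-defined case. The only difference is cosmetic: you pin down the radius as \emph{exactly} $|s-1|$ via Cauchy--Hadamard and the identity theorem, whereas the paper gets by with (and only proves) the one-sided bound $r \leq |s-1|$, which already suffices for the divergence conclusion.
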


\begin{proof}
Let $s \in \C$. If $s = 1$, then, the term at $k = 0$ of series (\ref{Taylor}) evaluated at $z = 1$ is undefined, so the series is not well-defined. In this case, $N = 0$ satisfies our claim.

To complete our proof, let $s \neq 1$. By Taylor's Theorem, there is a radius of convergence $r \geq 0$ so that the series (\ref{Taylor}) converges absolutely when evaluated at $z \in \C$ satisfying $|z-s| < r$. In addition, it must diverge when evaluated at $z \in \C$ satisfying $|z-s| > r$. Now, since $\zeta(z,a)$ has a pole at $z = 1$, we have that series (\ref{Taylor}) cannot converge when evaluated $z = 1$. 
Thus, we must have that $|s-1| \geq r$. 

Let $N > 0$ satisfy $|s+N - s| = N > |s-1|>r$. Then, we have that the series (35) evaluated at $z = s+N$ 
 $$\zeta(s+N,a)- \frac{1}{a^s} = \sum_{k = 0}^\infty \frac{D_s^k \left( \zeta(s,a) - \frac{1}{a^s}\right)}{k!}N^k=\exp(ND)\left[\zeta(s,a) - \frac{1}{a^s}\right]$$
 must be divergent. So we have found the desired $N \geq 0$.
\end{proof}

\begin{lemma}\label{p_nne0} For $s \in \Z_{\geq 0}$ and $n>0$, $p_n(s)\neq 0$. Specifically, $p_n(s)\geq   \frac{1}{(s-1)!(n+1)}$ for $s \in \Z_{> 0}$ and $n>0$.
\end{lemma}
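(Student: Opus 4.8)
The plan is to prove the statement directly from the definition
$$p_n(s) = \frac{1}{(n+1)!}\prod_{j=0}^{n-1}(s+j)$$
by analyzing the product $\prod_{j=0}^{n-1}(s+j)$ when $s$ is a positive integer. First I would observe that for $s \in \Z_{>0}$ this product telescopes into a ratio of factorials: since the factors are the consecutive integers $s, s+1, \dots, s+n-1$, we have $\prod_{j=0}^{n-1}(s+j) = \frac{(s+n-1)!}{(s-1)!}$. This immediately shows $p_n(s) = \frac{(s+n-1)!}{(s-1)!\,(n+1)!}$, which is a ratio of positive integers and hence strictly positive, giving $p_n(s) \neq 0$. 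The case $s=0$ should be handled separately: when $s=0$ the factor at $j=0$ is $s+0=0$, so $\prod_{j=0}^{n-1}(s+j)=0$ and thus $p_0$-type products vanish; so the claim $p_n(s)\neq 0$ must be restricted to $s \in \Z_{>0}$ as the second sentence indicates, and I would note that the first sentence's inclusion of $s=0$ is only consistent if interpreted carefully (indeed for $s=0$ the product is zero).

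For the quantitative lower bound, the goal is $p_n(s) \geq \frac{1}{(s-1)!\,(n+1)}$ for $s \in \Z_{>0}$ and $n>0$. Starting from the closed form $p_n(s) = \frac{(s+n-1)!}{(s-1)!\,(n+1)!}$, I would factor out $\frac{1}{(s-1)!}$ and reduce the problem to showing
$$\frac{(s+n-1)!}{(n+1)!} \geq \frac{1}{n+1}, \quad \text{i.e.} \quad \frac{(s+n-1)!}{n!} \geq 1.$$
This last inequality is clear: for $s \geq 1$ and $n \geq 1$ we have $s+n-1 \geq n$, so $(s+n-1)! \geq n!$, making the ratio at least $1$. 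Multiplying back by $\frac{1}{(s-1)!}$ and $\frac{1}{n+1}$ recovers the stated bound.

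I do not anticipate a genuine obstacle here, as the argument is elementary once the product is rewritten as a factorial ratio; the only subtlety worth flagging is the boundary bookkeeping. Concretely, I would double-check the $s=1$ case, where $(s-1)! = 0! = 1$ and the bound reads $p_n(1) \geq \frac{1}{n+1}$; the closed form gives $p_n(1) = \frac{n!}{(n+1)!} = \frac{1}{n+1}$, so equality holds and the bound is tight. This confirms the inequality cannot be improved in general and validates the constant. The main care point is simply ensuring the factorial identity $\prod_{j=0}^{n-1}(s+j) = \frac{(s+n-1)!}{(s-1)!}$ is applied only for integer $s$, which is exactly the hypothesis.
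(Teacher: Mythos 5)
Your argument for $s \in \Z_{>0}$ is correct and is essentially identical to the paper's: both of you rewrite the product as a factorial ratio, $p_n(s) = \frac{(s+n-1)!}{(s-1)!\,(n+1)!}$, and use $s+n-1 \geq n$ to get $(s+n-1)! \geq n!$, hence $p_n(s) \geq \frac{n!}{(s-1)!\,(n+1)!} = \frac{1}{(s-1)!\,(n+1)}$; your check that $s=1$ gives equality, so the constant is sharp, is a nice extra. Where you depart from the paper is the $s=0$ case, and there you are right and the paper is wrong: the paper's proof asserts $p_n(0)=\frac{(n-1)!}{(n+1)!}=\frac{1}{n(n+1)}\neq 0$ for $n>1$, but that computation silently drops the $j=0$ factor of $\prod_{j=0}^{n-1}(0+j)$, which is $0$. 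By the definition of $p_n$ one has $p_n(0)=0$ for every $n>0$ (equivalently, $1/\Gamma(s)$ vanishes at $s=0$), so the first sentence of the lemma is false at $s=0$ and the nonvanishing claim must indeed be restricted to $s\in\Z_{>0}$, exactly as you flag. This slip propagates mildly downstream: in Theorem \ref{main} the assertion that $p_n(s)$ can vanish only for $s\in\Z_{<0}$ should read $s\in\Z_{\leq 0}$, with $s=0$ then handled by the same ``zero times a divergent series'' argument used there for negative integers (so that theorem's conclusion is unaffected), and in Lemma \ref{c1} the series $\sum_n p_n(0)$ still converges, but trivially, because all terms with $n\geq 1$ vanish rather than because they equal $\frac{1}{n(n+1)}$.
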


\begin{proof}

For $s=0$, notice that for $n>1$, $p_n(0)=\frac{(n-1)!}{(n+1)!}= \frac{1}{n(n+1)}\neq 0$. Now, let $s \in \Z_{> 0}$. Observe that, for all $n > 0$, we have $p_n(s) = \frac{(s+n-1)!}{(s-1)!(n+1)!} \geq \frac{(1+n-1)!}{(s-1)!(n+1)!} = \frac{1}{(s-1)!(n+1)}\neq 0$.
\end{proof}

\begin{lemma}\label{eq0} For $s\in \Z_{< 0}$ and $n\geq1-s$, $p_n(s)=0$.

\end{lemma}
\begin{proof} 
Let $s \in \Z_{< 0}$. For $n>1$, $p_n(s)=\frac{1}{(n+1)!}\prod_{j=0}^{n-1}s+j$. Note that $1-s\in\Z_{\geq 0}$ and so for $N:=1-s$, \begin{align*}\prod_{j=0}^{N-1}s+j &=(s+N-1)(s+N-2)\dots (s+2)(s+1) \cdot s \\ &=(s+1-s-1)(s+1-s-2)\dots (s+2)(s+1) \cdot s = 0 \end{align*} Thus for each $n\geq N$, $p_n(s)=0$.
\end{proof}

\begin{theorem}\label{main} $T\left[ \zeta (s, a) - \frac{1}{a^s} \right]=\sum_{n=0}^{\infty} p_n(s)\exp(nD)\left[\zeta(s,a)- \frac{1}{a^s}\right]$ diverges for all complex numbers $s\in \C$.\end{theorem}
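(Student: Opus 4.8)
The plan is to prove the stronger statement that for every $s\in\C$ the formal series
$T\left[\zeta(s,a)-\frac{1}{a^s}\right]=\sum_{n=0}^{\infty}p_n(s)\exp(nD)\left[\zeta(s,a)-\frac{1}{a^s}\right]$
does not even form a well-defined sequence of complex numbers, which immediately forces divergence. Writing $f=\zeta(s,a)-\frac{1}{a^s}$ and $L_n[f]=p_n(s)\exp(nD)[f]$, it suffices to produce, for each fixed $s$, a single index $N$ for which the term $L_N[f]$ is not a well-defined complex number: the partial sum $\sum_{n=0}^{N}L_n[f]$ is then undefined, so the sequence of partial sums cannot converge. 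The engine is Proposition \ref{taylordiv}, which already supplies an index $N\geq 0$ at which the inner series $\exp(ND)[f]=\sum_{k=0}^{\infty}\frac{D_s^k f}{k!}N^k$ diverges. Everything then reduces to analyzing the scalar $p_N(s)$ that multiplies this divergent inner series, and this is exactly what Lemmas \ref{p_nne0} and \ref{eq0} are for.

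First I would dispose of the generic case $s\notin\Z_{< 0}$. If $s\notin\Z$ then no factor $s+j$ vanishes, so $p_n(s)\neq 0$ for every $n$; if $s\in\Z_{\geq 0}$ then Lemma \ref{p_nne0} gives $p_n(s)\neq 0$ for every $n>0$. In either case the proof of Proposition \ref{taylordiv} shows that the inner series diverges for every integer $N>|s-1|$, and among these infinitely many indices we may select one with $p_N(s)\neq 0$ (for $s=1$ the proposition instead gives the divergent index $N=0$, where $p_0(s)=1\neq 0$). For such an $N$ the term $L_N[f]=\sum_{k=0}^{\infty}p_N(s)\frac{D_s^k f}{k!}N^k$ is a fixed nonzero scalar times a divergent series and hence diverges outright. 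Thus $T[f]$ is already ill-defined at its $N$-th term.

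The delicate case, and the genuine obstacle, is $s\in\Z_{< 0}$. Here $|s-1|=1-s$, so Proposition \ref{taylordiv} yields divergence of the inner series only for $N>1-s$; at $N=1-s$ the evaluation point $z=s+N$ lands exactly on the pole $z=1$, so that series diverges as well, and for $0\leq N\leq -s$ the point $z=s+N\in\{s,\dots,0\}$ lies strictly inside the disk of convergence, making $L_N[f]$ a perfectly finite number. But Lemma \ref{eq0} says precisely that $p_N(s)=0$ for every $N\geq 1-s$, which is exactly the range where the inner series diverges. Consequently there is \emph{no} index at which a nonzero scalar multiplies a divergent inner series: every divergent inner series is weighted by $0$, and the case-two argument above simply does not apply. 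This ``$0\cdot\infty$'' ambiguity is the crux, and deciding whether the offending term equals $0$ or is undefined is what determines the truth of the theorem.

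I would resolve it by holding to the definition used throughout the section: the term $L_N[f]=p_N(s)\exp(ND)[f]$ is a well-defined complex number only when $\exp(ND)[f]$ is itself a well-defined complex number, since forming the product requires the second factor to exist as an element of $\C$. For $N=1-s$, the inner series $\exp(ND)[f]$ is the Taylor expansion of $f$ evaluated at the pole $z=1$, which is not a complex number; multiplying a nonexistent value by $p_N(s)=0$ does not produce $0$ but leaves the term undefined. Hence the $(1-s)$-th term of $T[f]$ is not a well-defined complex number, the sequence of partial sums of $T[f]$ fails to be a sequence of complex numbers, and $T[f]$ diverges. Combining the two cases gives divergence for every $s\in\C$. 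I expect the one contestable step to be precisely this treatment of $0\cdot\exp(ND)[f]$ in the negative-integer case, which is why the argument must lean on the paper's framing that each summand has to independently be a bona fide complex number before the outer series can be discussed at all.
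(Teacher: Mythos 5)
Your proof is correct and follows essentially the same route as the paper's: both invoke Proposition \ref{taylordiv} to produce a divergent inner series, use Lemma \ref{p_nne0} (equivalently the $\Gamma$-function form of $p_n$) and Lemma \ref{eq0} to split into the cases $s\notin\Z_{<0}$ and $s\in\Z_{<0}$, and resolve the latter by declaring $0\cdot\exp(ND)\left[\zeta(s,a)-\frac{1}{a^s}\right]$ undefined whenever the inner series diverges. Your version is in fact slightly sharper, since you note that for $s\in\Z_{<0}$ the divergent inner series occur exactly at the indices $N\geq 1-s$ where $p_N(s)=0$ (so the paper's subcase $N<1-s$ is vacuous), making explicit that the negative-integer case rests entirely on the same ``$0\cdot\infty$'' convention the paper adopts.
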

\begin{proof} From Proposition \ref{taylordiv},  since for all $s\in \C$ and all $n \geq 0$, we can find $N \geq 0$ so that $p_N(s)\exp(ND)[\zeta(s,a)- \frac{1}{a^s}]$ is divergent whenever $p_n(s)\neq 0$. Since $p_n$ can be defined in terms of the $\Gamma$-function as in equation (\ref{Gamma}), $p_n(s)$ can only be equal to zero if $s \in \Z_{< 0}$. 

Assume that $s \in \Z_{< 0}$.  By Lemma \ref{eq0}, for $n\geq 1-s$, $p_n(s)=0$.  By Proposition \ref{taylordiv}, there is some $N\geq 0$ so that $\exp(ND)\left[\zeta(s,a) - \frac{1}{a^s}\right]$ diverges.  If $N<1-s$, then $p_N(s)\neq 0$ and $p_N(s)\exp(ND)\left[\zeta(s,a) - \frac{1}{a^s}\right]$ diverges as above.

If $s \in \Z_{< 0}$ and $N\geq1-s$, then $p_N(s) = 0$.  However, $0\cdot \exp(ND)\left[\zeta(s,a)- \frac{1}{a^s}\right]$ is also not a complex number since $\exp(ND)\left[\zeta(s,a)- \frac{1}{a^s}\right]$ diverges.

Recall that $T=\sum_{n=0}^{\infty}p_n(s) \exp(nD)$. Then, for any $s \in \C$, we can find some $N \geq 0$ so that the $N^{th}$ partial sum of $T[\zeta(s,a)- \frac{1}{a^s}]$, $$\sum_{n=0}^N p_n(s)\exp(nD)\left[\zeta(s,a)- \frac{1}{a^s}\right]$$
is not a complex number. 
 Thus, we cannot define the series $T[\zeta(s,a)- \frac{1}{a^s}]$ at any such points $s$. We conclude the series does not converge in $\C $.
\end{proof}




\subsubsection{Convergence of $T$ in a General Setting}\label{p_n}

We now wish to discuss the convergence of $T$ in a more general setting. To do so, we first look at $T$ applied to the constant function with the goal of understanding the behavior of the series $\sum_{n=0}^\infty p_n(s)$ for $s \in \C$. 

\

\begin{lemma}\label{c1}
For $s \in \Z_{> 0}$, the series $\sum_{n =0}^\infty p_n(s)$ diverges, and for $s\in \Z_{\leq 0}$, the series $\sum_{n =0}^\infty p_n(s)$ converges.
\end{lemma}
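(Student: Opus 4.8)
The plan is to treat the two regimes separately and, in each case, reduce the question to a comparison with a series whose behavior is already understood. Throughout I would use the explicit bounds on $p_n(s)$ recorded in Lemmas \ref{p_nne0} and \ref{eq0}, together with the observation that for $s\in\Z_{>0}$ every factor $s+j$ with $0\le j\le n-1$ is strictly positive, so that $p_n(s)>0$ and the comparison test applies termwise.

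For the divergent case $s\in\Z_{>0}$, I would invoke Lemma \ref{p_nne0} directly: it gives $p_n(s)\ge \frac{1}{(s-1)!(n+1)}$ for every $n>0$. Summing, $\sum_{n=1}^\infty p_n(s)\ge \frac{1}{(s-1)!}\sum_{n=1}^\infty \frac{1}{n+1}$, and the right-hand side is a tail of the harmonic series, hence diverges. Since the terms $p_n(s)$ are nonnegative, the comparison test then forces $\sum_{n=0}^\infty p_n(s)$ to diverge. This case therefore requires essentially no new work beyond Lemma \ref{p_nne0}.

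For the convergent case $s\in\Z_{\le 0}$, I would first dispose of $s\in\Z_{<0}$ using Lemma \ref{eq0}: for $n\ge 1-s$ we have $p_n(s)=0$, so that $\sum_{n=0}^\infty p_n(s)=\sum_{n=0}^{-s}p_n(s)$ is a finite sum and converges trivially. The remaining case is $s=0$, which sits exactly on the boundary between the two regimes. Here I would compute $p_n(0)$ explicitly, as in the proof of Lemma \ref{p_nne0}, obtaining $p_n(0)=\frac{1}{n(n+1)}$ for $n>1$; then $\sum_n p_n(0)$ converges by comparison with $\sum_n \frac{1}{n^2}$, and in fact telescopes to a finite value. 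Combining the three subcases yields convergence for all $s\in\Z_{\le 0}$.

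The argument is short precisely because the hard analytic content has already been isolated in Lemmas \ref{p_nne0} and \ref{eq0}; the only step demanding real care is the boundary value $s=0$, where one must confirm that the series is genuinely summable rather than merely eventually zero, and that the bound from Lemma \ref{p_nne0} (which is stated for $s\in\Z_{>0}$) is not being misapplied. Everything else reduces to a comparison with the harmonic series for the divergent half and to a terminating finite sum for $s<0$.
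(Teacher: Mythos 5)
Your proof follows the paper's own argument essentially step for step: Lemma \ref{p_nne0} plus comparison with the harmonic series for $s\in\Z_{>0}$, Lemma \ref{eq0} (vanishing of $p_n(s)$ for $n\ge 1-s$) for $s\in\Z_{<0}$, and an explicit computation of $p_n(0)$ for the boundary case $s=0$. One remark: both you and the paper assert $p_n(0)=\frac{1}{n(n+1)}$ for $n>1$, but in fact $p_n(0)=0$ for all $n\ge 1$, since the product $\prod_{j=0}^{n-1}(0+j)$ contains the factor $j=0$; this slip is harmless here, as it only makes convergence at $s=0$ more immediate.
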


\begin{proof}
Let $s \in \Z_{> 0}$. From Lemma \ref{p_nne0}, for all $n > 0$, we have $p_n(s) \geq \frac{1}{(s-1)!(n+1)}$. Then, by series comparison, we have that, since $ \sum_{n=1}^\infty \frac{1}{n}$ diverges, then $\sum_{n=1}^\infty \frac{1}{(s-1)!(n+1)}$ also diverges. By comparison with $\sum_{n=1}^\infty \frac{1}{(s-1)!(n+1)}$, the series  $\sum_{n=1}^\infty p_n(s)$ diverges.

For $s=0$, notice that for $n>1$, $p_n(0)=\frac{(n-1)!}{(n+1)!}= \frac{1}{n(n+1)}$ and so $\sum_{n =0}^\infty p_n(0)$ converges. By the proof of Lemma \ref{eq0}, for $s \in \Z_{< 0}$ and $N=1-s$, we have $\prod_{j=0}^{N-1}s+j = 0.$  Thus for each $n\geq N$, $p_n(s)=0$ and so the series $\sum_{n =0}^\infty p_n(s)$ converges for $s \in \Z_{< 0}$.
\end{proof}

It is more difficult to determine what happens outside of $ \Z$.


\begin{corollary}\label{constant}
For $s \in \R$ with $s > 1$, the series $\sum_{n =0}^\infty p_n(s)$ diverges.
\end{corollary}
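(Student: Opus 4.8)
The plan is to sidestep the ratio and root tests entirely, since for this series the ratio $p_{n+1}(s)/p_n(s) = (s+n)/(n+2) \to 1$ and both tests are inconclusive. Instead I would exploit the fact that $p_n(s)$ is monotonically increasing in $s$ on $(0,\infty)$ and reduce the problem to the single, already-understood, value $s=1$, whose tail is exactly the harmonic series.

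First I would observe that for each fixed $n>0$ the map $s \mapsto p_n(s) = \frac{1}{(n+1)!}\prod_{j=0}^{n-1}(s+j)$ is a product of the positive, strictly increasing factors $(s+j)$ for $s>0$, hence is itself increasing in $s$. Consequently, for any real $s \geq 1$ each factor satisfies $s+j \geq 1+j > 0$, giving the lower bound
\[
p_n(s) \;\geq\; p_n(1) \;=\; \frac{1}{(n+1)!}\prod_{j=0}^{n-1}(1+j) \;=\; \frac{n!}{(n+1)!} \;=\; \frac{1}{n+1}.
\]
In particular this holds for every $s>1$. I would then conclude by direct comparison with the harmonic series: since $\sum_{n=1}^\infty \frac{1}{n+1}$ diverges and $p_n(s) \geq \frac{1}{n+1} > 0$ for all $n>0$ when $s>1$, the comparison test forces $\sum_{n=0}^\infty p_n(s)$ to diverge. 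This mirrors the argument for integer $s$ in Lemma \ref{c1}, with $s=1$ playing the role of the base case.

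I expect essentially no obstacle here; the only point worth flagging is that the ordinary ratio/root tests fail because the ratio tends to $1$, so it is the monotonicity-plus-comparison route that keeps the argument short and self-contained. If one instead wanted a test that resolves the borderline behavior directly, Raabe's test also applies: $n\big(p_n(s)/p_{n+1}(s)-1\big)=\frac{n(2-s)}{n+s}\to 2-s<1$ for $s>1$, which again yields divergence. I would nonetheless present the harmonic comparison, as it is the most elementary and fits the style of the preceding lemmas.
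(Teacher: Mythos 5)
Your proof is correct and follows essentially the same route as the paper's: bound $p_n(s) \geq p_n(1) = \frac{1}{n+1}$ termwise for $s \geq 1$ and conclude by comparison with the harmonic series (the paper cites Lemma \ref{c1} for the divergence of $\sum_n p_n(1)$, which is itself the same harmonic bound you compute directly). The asides on the ratio and Raabe tests are accurate but not needed.
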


\begin{proof}
Let $s \in \R$ satisfy $s \geq 1$. First, observe that, for all integers $j \geq 0$, we also have that $s+j\geq 1+j$. This means that $p_n(s) \geq p_n(1)$ and since, by Lemma \ref{c1}, $\sum_{n=1}^\infty p_n(1)$ diverges we have that, by series comparison, $\sum_{n=1}^\infty p_n(s)$ also diverges.
\end{proof}

\begin{corollary}\label{constant}
For $s \in \C$ with $\text{Re}(s) > 1$, the series $\sum_{n =0}^\infty p_n(s)$ does not converge absolutely.
\end{corollary}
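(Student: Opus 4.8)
The plan is to reduce the complex case to the real case already handled in the preceding corollary by a direct comparison of nonnegative series. Absolute convergence concerns $\sum_{n=0}^\infty |p_n(s)|$, and for $n > 0$ we have $|p_n(s)| = \frac{1}{(n+1)!}\prod_{j=0}^{n-1}|s+j|$, so the idea is to bound each factor $|s+j|$ from below by a quantity depending only on $\text{Re}(s)$, thereby dominating from below by the corresponding series at the real point $\text{Re}(s)$.

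First I would write $s = \sigma + it$ with $\sigma = \text{Re}(s) > 1$ and observe that for every integer $j \geq 0$ the number $s+j$ has real part $\sigma + j > 0$, so that $|s+j|^2 = (\sigma+j)^2 + t^2 \geq (\sigma+j)^2$ and hence $|s+j| \geq \sigma + j$. Taking the product of these bounds over $j = 0, \dots, n-1$ gives $\prod_{j=0}^{n-1}|s+j| \geq \prod_{j=0}^{n-1}(\sigma+j)$, and therefore $|p_n(s)| \geq p_n(\sigma)$ for all $n > 0$, where $p_n(\sigma)$ is the value of the same defining expression evaluated at the real argument $\sigma$.

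Finally, I would invoke the comparison test for series with nonnegative terms. The terms $p_n(\sigma)$ are nonnegative, and by the preceding corollary (the real case, with $\sigma > 1$) the series $\sum_{n=0}^\infty p_n(\sigma)$ diverges. Since $|p_n(s)| \geq p_n(\sigma) \geq 0$ for all $n > 0$, comparison forces $\sum_{n=0}^\infty |p_n(s)|$ to diverge as well, which is precisely the failure of absolute convergence asserted in the statement.

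There is no serious obstacle here; the only step requiring a little care is the inequality $|s+j| \geq \sigma + j$, where one must note that $\sigma + j > 0$ so that passing to square roots preserves the inequality — this positivity is exactly what $\sigma > 1$ guarantees. The $n = 0$ term contributes $p_0(s) = 1$ and has no bearing on divergence, so it may be discarded throughout.
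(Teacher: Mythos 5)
Your proof is correct and follows essentially the same route as the paper: a termwise lower bound $|s+j|\geq \mathrm{Re}(s)+j$ followed by the comparison test, reducing to the known divergence in the real case (the paper compares directly to $p_n(1)=\frac{1}{n+1}$ via $|s+j|\geq 1+j$, while you pass through $p_n(\sigma)$ and the preceding real corollary, which is the same mechanism one step removed). No gaps; the positivity remark justifying $|s+j|\geq \sigma+j$ is exactly the care the argument needs.
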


\begin{proof}
Let $s \in \C$ satisfy $\text{Re}(s) \geq 1$. For all integers $j \geq 0$,  note that $|s+j| \geq 1+j$. This means that $|p_n(s)| \geq |p_n(1)|$ and since, by Lemma \ref{c1}, $\sum_{n=1}^\infty |p_n(1)|$ diverges we have that, by series comparison, $\sum_{n=1}^\infty |p_n(s)|$ also diverges.
\end{proof}







We can guarantee convergence when restricting to $L^1(\R_{\geq 0})$. First, we need to generalize $p_n(s)$. Writing it as $p(n,s) \coloneqq p_n(s)$ and observing that, for all integers $n\geq 0$, we have that $p(n,s)= \frac{\Gamma(s+n)}{\Gamma(n+1)\Gamma(s)}$, we can make sense of $p(n,s)$ when $n$ is any real number. We first define the set $U = \{s \in \C: s\neq n \text{ for all } n \in \Z_{< 0}\}$. We, then, consider $p : \R_{\geq 0} \times U \to \C$ given by $p(x,s) = \frac{\Gamma(s+x)}{\Gamma(x+1)\Gamma(s)}$, which makes sense for all $x \in \R_{\geq 0}$ and all $s \in U$.

\begin{proposition}\label{l1conv}
If $f:\C \to \C$ is analytic with radius of convergence equal to $+\infty$ and if we have that, for all $s \in U$, the function $p(\cdot,s)f(\cdot + s)$ is in $L^1(\R_{\geq 0})$, then $T[f](s)$ converges absolutely for all $s \in U$.
\end{proposition}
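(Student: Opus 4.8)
The plan is first to turn $T[f](s)$ into an honest numerical series. Because $f$ is entire (radius of convergence $+\infty$), its Taylor series about any $s\in\C$ converges everywhere, so the shift characterization holds \emph{exactly}: $\exp(nD)[f](s)=\sum_{k=0}^\infty \frac{D_s^k f(s)}{k!}n^k = f(s+n)$, and in particular each term $p_n(s)\exp(nD)[f](s)$ is a genuine complex number. This is precisely the feature that failed for $\zeta(s,a)-\frac{1}{a^s}$ in Proposition~\ref{taylordiv} and Theorem~\ref{main}. Writing $p_n(s)=p(n,s)$, we therefore have $T[f](s)=\sum_{n=0}^\infty p(n,s)f(s+n)$, and proving absolute convergence reduces to showing $\sum_{n=0}^\infty |\phi(n)|<\infty$, where $\phi(x):=p(x,s)f(s+x)$ is the function the hypothesis places in $L^1(\R_{\geq 0})$.

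The core of the argument is to transfer integrability of $\phi$ to summability of its integer samples. I would use the fundamental theorem of calculus on each unit interval: for $x\in[n,n+1]$ one has $\phi(n)=\phi(x)-\int_n^x \phi'(t)\,dt$, and integrating in $x$ over $[n,n+1]$ gives
\[
|\phi(n)|\;\leq\;\int_n^{n+1}|\phi(x)|\,dx+\int_n^{n+1}|\phi'(t)|\,dt .
\]
Summing over $n\geq 0$ telescopes the right-hand side into $\|\phi\|_{L^1(\R_{\geq 0})}+\|\phi'\|_{L^1(\R_{\geq 0})}$, so $\sum_{n}|\phi(n)|$ is finite as soon as both $\phi$ and $\phi'$ lie in $L^1(\R_{\geq 0})$. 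The first is exactly the hypothesis; the work is to secure the second. To analyze $\phi'$ I would differentiate $\phi'=p'(\cdot,s)f(s+\cdot)+p(\cdot,s)f'(s+\cdot)$ and use the logarithmic-derivative identity $p'(x,s)=p(x,s)\bigl(\psi(s+x)-\psi(x+1)\bigr)$, where $\psi$ is the digamma function; since $\psi(s+x)-\psi(x+1)=O(1/x)$ as $x\to\infty$, the first summand is dominated by $|\phi(x)|$ up to a bounded factor and so contributes a finite $L^1$ quantity controlled by the hypothesis.

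The main obstacle is the remaining piece $p(\cdot,s)f'(s+\cdot)$, equivalently the passage from $L^1$-control of $\phi$ to pointwise control of $\phi$ at the integers. Mere membership of $\phi$ in $L^1(\R_{\geq 0})$ does not by itself force $\sum_n|\phi(n)|<\infty$, since an $L^1$ function can carry tall narrow spikes at the integers, so the analyticity of $f$ and the explicit Gamma-quotient form of $p$ must genuinely be used. One attacks $f'$ through Cauchy's estimate $|f'(s+x)|\leq \sup_{|w-(s+x)|=1}|f(w)|$, using entireness of $f$ together with the smoothness and regular growth $|p(x,s)|\sim |x|^{\mathrm{Re}(s)-1}/|\Gamma(s)|$ of $p$ from Stirling's formula; the delicate point is that Cauchy's estimate bounds $f'$ by nearby \emph{maxima} of $f$ rather than by $|f(s+x)|$ itself, so one must argue that the $L^1$-smallness of $\phi$ propagates to a neighborhood.

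If a clean bound on $\phi'$ cannot be extracted from the stated hypotheses alone, the honest fallback is the decreasing integral test already used in Lemma~\ref{absconv}: show that $|\phi|$ is eventually monotone decreasing---using the ratio $p(x+1,s)/p(x,s)=(s+x)/(x+1)\to 1$ together with the decay of $|f(s+x)|$ forced by integrability---and then bound $\sum_{n\geq N}|\phi(n)|\leq |\phi(N)|+\int_N^\infty|\phi(x)|\,dx$, with the finitely many initial terms contributing a finite amount. Establishing that eventual monotonicity in the presence of the zeros and oscillation of $f$ is the delicate point of that alternative route, and I expect it, rather than the bookkeeping in the first two paragraphs, to be where the real effort lies.
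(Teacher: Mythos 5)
Your proposal is not a complete proof, and you say so yourself: the route through the fundamental theorem of calculus needs $\phi'\in L^1(\R_{\geq 0})$ for $\phi=p(\cdot,s)f(s+\cdot)$, which the stated hypotheses do not supply (Cauchy's estimate bounds $|f'(s+x)|$ by the maximum of $|f|$ on a circle about $s+x$, and $L^1$ control of $|f|$ along the single horizontal line through $s$ says nothing about that maximum), while your fallback route needs $|\phi|$ to be eventually monotone, which is likewise unavailable. The reduction in your first paragraph --- entirety of $f$ makes $\exp(nD)[f](s)=f(s+n)$ exact, so that $T[f](s)$ is the honest numerical series $\sum_{n}p_n(s)f(s+n)$ --- is the same (tacit) reduction the paper makes, and is fine; the gap is entirely in the passage from integrability to summability.

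The instructive point is what your obstruction says about the paper's own proof, which reads, in its entirety: by assumption $\int_0^\infty|p(x,s)f(x+s)|\,dx<\infty$; then, by the integral test for series, $\sum_{n=0}^\infty|p_n(s)f(s+n)|$ converges. That is exactly your fallback route with the monotonicity verification silently omitted, and without monotonicity the inference is false. Concretely, let
\begin{equation*}
f(z)=\sum_{k=1}^{\infty}e^{-4^{k}(z-k)^{2}}.
\end{equation*}
For $|z|\le R$ and $k\ge 4R$ the $k$-th term has modulus at most $e^{-4^{k}k^{2}/2}$, so the series converges locally uniformly and $f$ is entire; on the real axis $f\ge 0$ and $\int_{\R}f=\sqrt{\pi}\sum_{k}2^{-k}<\infty$; yet $f(m)\ge 1$ at every integer $m\ge 1$. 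Taking $s=1$, where $|p(x,1)|\le 1$ and $p_n(1)\ge \frac{1}{n+1}$, the $L^1$ hypothesis at $s=1$ holds, but $\sum_{n}|p_n(1)f(1+n)|\ge\sum_{n}\frac{1}{n+1}$ diverges. This does not refute the proposition itself, since this $f$ violates the hypothesis at every non-real $s$ (on the line $\mathrm{Im}\,s=t\neq 0$ the spikes have modulus $e^{4^{k}t^{2}}$); but it shows that no argument using the hypothesis only at the single point $s$ under consideration --- the paper's argument, and both of yours --- can possibly work. So the correct conclusion of your analysis is that the paper's one-line proof is invalid as written, and the statement can only be rescued by strengthening the hypothesis: for instance, assume eventual monotonicity of $x\mapsto|p(x,s)f(s+x)|$, or assume the integrability uniformly over the vertical translates $s+iy$ with $|y|\le\frac{1}{2}$, in which case the sub-mean-value inequality $|f(s+n)|\le\frac{1}{\pi r^{2}}\int_{|w-(s+n)|\le r}|f|\,dA$ converts strip-integrability into summability of the integer samples.
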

\begin{proof}
Let $s \in U$. By assumption, $\int_0^\infty |p(x,s)f(x+s)|dx<\infty$. Then, by the integral test for series, $\sum_{n=0}^\infty |p_n(s)f(s+n)|$ must converge, as desired. 
\end{proof}

This means that, in such cases, we can define a function $g : U \to \C$ given by $g(s) = T[f](s)$. Our next result seeks to give a sufficient condition for uniform convergence.

\begin{proposition}\label{uc1}
If $f : \C \to \C$ is analytic on all of $\C$ with radius of convergence equal to $+\infty$ and if there are constants $a,b \in \R\cup\{-\infty\}$ and $c \in \R \cup\{+\infty\}$ with $b <c$ so that, when we consider the set $U := \{z \in \C: \text{Re}\,(z) \in (a,\infty) \text{ and } \text{Im}(z) \in (b,c)\}$, we have that $\sum_{n = 0}^\infty ||p_n(s) f(s+n)||_{C^\infty(U)}$ converges, then $T[f]$ converges uniformly to a continuous function $g : U \to \C$. 
\end{proposition}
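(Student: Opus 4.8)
The plan is to recognize $T[f](s)=\sum_{n=0}^\infty p_n(s)f(s+n)$ as a series of continuous (indeed entire) functions on $U$ and then invoke the Weierstrass $M$-test. The first step is to justify that each summand really is $p_n(s)f(s+n)$. Because $f$ is entire with radius of convergence $+\infty$, its Taylor series about any $s$ converges everywhere, so the shift characterization $\exp(nD)[f](s)=\sum_{k=0}^\infty \frac{D_s^k f(s)}{k!}n^k=f(s+n)$ holds \emph{globally} — in contrast to the Hurwitz case of Section \ref{HurCon}, there is no pole to obstruct the identity. Hence $L_n[f](s)=p_n(s)f(s+n)$, and since $p_n$ is a polynomial and $f$ is entire, each $u_n:=p_n(\cdot)f(\cdot+n)$ is analytic, hence continuous, on $U$.

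Next I would set $M_n:=\|p_n(s)f(s+n)\|_{C^\infty(U)}$ and observe that the $C^\infty(U)$ norm dominates the supremum norm on $U$, since $\sup_{s\in U}|u_n(s)|$ is exactly the $k=0$ contribution to $\|u_n\|_{C^\infty(U)}$; thus $\sup_{s\in U}|u_n(s)|\le M_n$. By hypothesis $\sum_{n=0}^\infty M_n<\infty$, so the Weierstrass $M$-test applies: the partial sums $S_N(s)=\sum_{n=0}^N u_n(s)$ form a uniformly Cauchy sequence on $U$, and therefore $\sum_{n=0}^\infty u_n$ converges uniformly on $U$ to some function $g$. In particular $T[f](s)$ converges absolutely and uniformly on $U$.

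Finally, continuity of $g$ follows from the standard fact that a uniform limit of continuous functions is continuous: each $S_N$ is a finite sum of the continuous $u_n$ and hence continuous, and $S_N\to g$ uniformly on $U$, so $g:U\to\C$ is continuous. If one wants more, the same $M$-test argument applied to the $k$-th derivatives — which are likewise controlled by $\|\cdot\|_{C^\infty(U)}$ — shows that every differentiated series converges uniformly, so that $g$ is in fact $C^\infty$ and derivatives may be taken termwise; but the stated claim only needs continuity.

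The main obstacle is more a point requiring care than a genuine difficulty: one must pin down the precise meaning of $\|\cdot\|_{C^\infty(U)}$ and verify that it bounds the sup norm, so that the passage from $\sum_n M_n<\infty$ to the hypothesis of the $M$-test is legitimate. Once that is settled, the proof is simply the $M$-test combined with the global validity of the shift identity $\exp(nD)[f](s)=f(s+n)$, which is precisely what the assumption that $f$ has infinite radius of convergence buys us.
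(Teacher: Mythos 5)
Your proposal is correct and follows essentially the same route as the paper's own proof: both use the infinite radius of convergence to identify $\exp(nD)[f](s)$ with $f(s+n)$ via Taylor's theorem, bound the supremum of each term $p_n(s)f(s+n)$ on $U$ by its $C^\infty(U)$ norm, and then invoke the Weierstrass $M$-test to obtain uniform convergence to a continuous limit. Your write-up is in fact slightly cleaner than the paper's (which buries the same estimate in a $\limsup$ expression), but the mathematical content is identical.
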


\begin{proof}
Let $s \in \C$. Since $f$ is analytic with radius of convergence equal to $+\infty$, for all integers $n>0$, we have that $\sum_{k=0}^\infty \frac{f'(s)}{k!}n^k$ converges to $f(s+n)$ by Taylor's theorem. Then, we have that for each $s \in U$,
\begin{align*}
\left|p_n(s) \exp(nD)[f(s)]\right|=|p_n(s)f(n+s)|\leq ||p_n(s)f(n+s)||_{C^\infty(U)}
\end{align*}
and
\begin{align*}
|T[f](s)| &=\left|\sum_{n=0}^{\infty} p_n(s) \exp(nD)[f](s)\right| \\ 
&\leq  \sum_{n = 0}^\infty \left|\left|p_n(s)\left( \limsup_{K \to \infty}\sum_{k=0}^K \frac{f'(s)}{k!}n^k \right)\right|\right|_{C^\infty(U)}
= 
\sum_{n=0}^\infty ||p_n(s) f(s+n)||_{C^\infty(U)}<\infty
\end{align*}

By the Weierstrass M-test, we have that $\sum_{n = 0}^\infty (p_n(\cdot) \exp(nD)[f(\cdot)])\big|_{U}$ converges uniformly to a continuous function $g : U \to \C$.
\end{proof}





We can have an analogous result when $U = \{z \in \C : \text{Re}(z) \in [a,\infty)$ and $\text{Im}(z) \in [c,b]\}$ as well as when $\text{Im}(z)$ is in a half-open interval.

\section{Generalizing Van Gorder's Operator}\label{secG}

The main reason the operator $T$ is not well-defined when applied to $\zeta$ is because $\zeta$ is not analytic and so the radius of convergence of its Taylor series expanstion is not $+\infty$. Specifically, the problem is that for all $s \in \C$, we can find some $N>0$ for which $\exp(nD)[\zeta (s,a)]$ will not be convergent. However, when treating the operator $\exp(nD)$ as  the shift operator, formally, we are able to show that (\ref{zetaDE}) and (\ref{HurDE}) hold.
With that in mind, we define an operator $G$, which agrees with $T$ on analytic functions with radius of convergence equal to $+\infty$ but which can be applied to a wider range of functions. 

Let $\mathcal{M}$ be the collection of meromorphic functions on $\C$ and $f\in\mathcal{M}$. Define $G:\mathcal{M}\to \mathcal{M} $ by 
\begin{align}\label{G}
G[f](s) = \sum_{n=0}^\infty p_n(s)f(s+n)
\end{align}

For this operator to be well-defined, we do not  require that $f$ be differentiable, a significant gain from the definition of $T$. Note that $G$ agrees with $T$ on analytic functions. Thus $G$ satisfies a version of Proposition \ref{l1conv} and Proposition \ref{uc1}. The assumption that $f$ be analytic may be weakened in such versions.

When $G$ is applied to $\zeta(s,a)$, we recover the identity (\ref{HurID1}) in Lemma \ref{id1} and we conclude that $G[\zeta(\cdot,a)]$ converges pointwise to a continuous function defined on $\C\setminus \{1\}$. 

\subsection{Using $G$ to get an identity for the $\zeta$-function}

Recalling our discussion of $T$, observe that when we evaluate $\exp(nD)\left[\zeta(-m,a)-\frac{1}{a^{-m}}\right]$ for $m>0$ and $n\geq 0$ integers, we have convergence of the Taylor series whenever $n \leq m$ because $|-m+n-(-m)| = n < |-m-1| = m+1$ and because $m+1$ is the radius of convergence of such series as we discussed in Section \ref{HurCon}. In addition, from the proof of Lemma \ref{eq0}, $p_n(-m) = 0$ for $n > m$. In addition, by the fact that the pole of $\zeta(s,a)$ at $s=1$ has residue $1$, we have that
\[
\lim_{s\to -m} (s + m)\zeta(s+m+1,a) = 1
\]
and, thus,
\[
\lim_{s\to -m} p_{m+1}(s)\zeta(s+m+1) = \frac{p_m(-m)}{m+2}
\]

This gives us the following equality for $m > 0$ an integer, by (\ref{absconv}), by our discussion above and by continuity.
\begin{align*}
-\frac{a^{m+1}}{m+1} = &\sum_{n=0}^m p_n(-m)\left( \zeta(-m+n,a)-\frac{1}{a^{-m+n}} \right) + \frac{p_{m}(-m)}{m+2}\\
 = &\frac{p_{m}(-m)}{m+2} + \sum_{n=0}^m p_n(-m) \left[\zeta(-m,a)-\frac{1}{a^{-m}} + \sum_{k=1}^\infty \frac{n^k}{k!} D_s^k \left[ \zeta(-m,a)-\frac{1}{a^{-m}}\right] \right]\\
= &\frac{p_{m}(-m)}{m+2} + \sum_{n=0}^m p_n(-m) \left[\zeta(-m,a)-\frac{1}{a^{-m}} + \sum_{k=1}^\infty \frac{n^k}{k!} \left[ \zeta^{(k)}(-m,a)-(\log(a))^k a^m\right] \right]\\
= &\frac{p_{m}(-m)}{m+2} +\left[\zeta(-m,a)-\frac{1}{a^{-m}} \right] \sum_{n=0}^m p_n(-m)  \\
&\phantom{weeeeeeeeeeee}+\sum_{k=1}^\infty \frac{1}{k!} \left[ \zeta^{(k)}(-m,a)-(\log(a))^k a^m\right]\sum_{n=0}^m n^kp_n(-m)
\end{align*}

The interchange between the finite sum and the series is justified by the absolute convergence of Taylor series within its radius of convergence. When we look at the zeta function, we get an interesting identity using the trivial zeros of the zeta function and the definition of $p_n(-m)$.


\begin{theorem}\label{-m}
For $m > 0$ an integer, we have that 
\begin{align*}
-\frac{1}{2m+1} = &\frac{1}{2m+1}- \sum_{n=0}^{2m} \frac{1}{(n+1)!}\prod_{j=0}^{n-1} (-2m+j) \\ &+\sum_{k=1}^\infty \frac{1}{k!}\zeta^{(k)}(-2m)\sum_{n=1}^{2m} \frac{n^k}{(n+1)!} \prod_{j=0}^{n-1} (-2m+j)
\end{align*}
and that, for $m \geq 0$,
\begin{align*}
-\frac{1}{2m+2} = &-\frac{1}{2m+2} - \sum_{n=0}^{2m+1} \frac{1}{(n+1)!}\prod_{j=0}^{n-1} (-2m+j)  \\ &+\sum_{k=0}^\infty \frac{1}{k!}\zeta^{(k)}(-2m-1)\sum_{n=1}^{2m+1} \frac{n^k}{(n+1)!} \prod_{j=0}^{n-1} (-2m-1+j)
\end{align*}
\end{theorem}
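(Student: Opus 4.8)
The plan is to obtain Theorem~\ref{-m} as a specialization of the general identity derived in the discussion preceding the statement, evaluated at the negative integers $s=-2m$ and $s=-(2m+1)$ with $a=1$. The decisive simplification is that at $a=1$ we have $\log a=0$, so every term carrying a factor $(\log a)^k$ with $k\ge 1$ drops out, while $1/a^{-m}=1$; the Hurwitz zeta function thereby collapses to $\zeta$ and the general identity becomes a statement purely about $\zeta$ and its derivatives at a single negative integer. Before specializing I would record the two closed forms I will need: with $p_n(s)=\frac{1}{(n+1)!}\prod_{j=0}^{n-1}(s+j)$, a direct evaluation of the product of consecutive negative integers gives $p_{2m}(-2m)=\frac{(2m)!}{(2m+1)!}=\frac{1}{2m+1}$ and $p_{2m+1}(-(2m+1))=\frac{-(2m+1)!}{(2m+2)!}=-\frac{1}{2m+2}$, the signs arising from $(-1)^{2m}=1$ and $(-1)^{2m+1}=-1$. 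By Lemma~\ref{eq0} these finite ranges are exactly the ones on which $p_n$ is nonzero.

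For the first identity I would set $s=-2m$. Since $-2m$ is a trivial zero, $\zeta(-2m)=0$, so the coefficient $\zeta(-2m)-1$ of the middle term equals $-1$ and that term reduces to $-\sum_{n=0}^{2m}p_n(-2m)$, which is precisely the finite sum in the statement once $p_n$ is written out. The same vanishing $\zeta(-2m)=0$ annihilates what would be the $k=0$ term of the Taylor part, explaining why the derivative series is written from $k=1$; and since $n^k=0$ at $n=0$ for $k\ge 1$, the inner sum may start at $n=1$. It then remains to evaluate the residue/limit term $\frac{p_{2m}(-2m)}{2m+2}$ using the closed form above.

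For the second identity I would set $s=-(2m+1)$. Here $\zeta(-2m-1)\neq 0$, so the middle term $\bigl(\zeta(-2m-1)-1\bigr)\sum_{n=0}^{2m+1}p_n(-2m-1)$ must be split as $\zeta(-2m-1)\sum_n p_n(-2m-1)-\sum_n p_n(-2m-1)$. The second piece yields the explicit finite sum in the statement, whereas the first piece is exactly the $k=0$ term of the Taylor series $\sum_k\frac{1}{k!}\zeta^{(k)}(-2m-1)\sum_n n^k p_n(-2m-1)$; this is why the derivative sum in the second identity begins at $k=0$ rather than at $k=1$. I would fold that piece back into the series and then evaluate the residue/limit term $\frac{p_{2m+1}(-(2m+1))}{2m+3}$ with the closed form above.

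The substantive analytic work has already been done in establishing the general identity that precedes the theorem: the justification (from Section~\ref{HurCon}) that the shift-operator expansion of $\zeta(-m+n,a)-1/a^{-m+n}$ about $-m$ is valid only for $n$ strictly inside the radius of convergence $|-m-1|=m+1$, so the relevant sums are genuinely finite over $n\le m$, together with the $0\cdot\infty$ treatment of the $n=m+1$ term via the residue $1$ of $\zeta(\cdot,a)$ at $1$, which is what produces the $\frac{p_m(-m)}{m+2}$ contribution. Given that input, what remains is bookkeeping, and the part demanding the most care is precisely this bookkeeping: pinning down the closed forms $p_{2m}(-2m)$ and $p_{2m+1}(-(2m+1))$ and the resulting residue constant (with its correct sign), and keeping straight the structural difference between the even case, where the trivial zero removes the $k=0$ contribution, and the odd case, where the nonzero value of $\zeta$ forces the derivative series to start at $k=0$. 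I would confirm the leading constant against the base case $s=-2$ (using $\zeta(-2)=0$, $\zeta(-1)=-\tfrac{1}{12}$, $\zeta(0)=-\tfrac12$) before committing to the general form.
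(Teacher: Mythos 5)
Your route is exactly the paper's own: Theorem~\ref{-m} is meant to follow by specializing the identity displayed just before it,
\[
-\frac{a^{m+1}}{m+1} = \frac{p_{m}(-m)}{m+2} +\left[\zeta(-m,a)-\frac{1}{a^{-m}} \right] \sum_{n=0}^m p_n(-m) +\sum_{k=1}^\infty \frac{1}{k!} \left[ \zeta^{(k)}(-m,a)-(\log a)^k a^m\right]\sum_{n=0}^m n^kp_n(-m),
\]
at $a=1$ with $m$ replaced by $2m$ and $2m+1$, and the paper supplies no further argument beyond this. Your reductions (the trivial zero $\zeta(-2m)=0$ collapses the middle term in the even case, Lemma~\ref{eq0} truncates the $n$-sums, $\log 1=0$ removes the $(\log a)^k$ terms) and your closed forms $p_{2m}(-2m)=\frac{1}{2m+1}$, $p_{2m+1}(-2m-1)=-\frac{1}{2m+2}$ are all correct. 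The problem is the step you defer to the end: the residue term you correctly identify is $\frac{p_{2m}(-2m)}{2m+2}=\frac{1}{(2m+1)(2m+2)}$, which is \emph{not} the leading constant $\frac{1}{2m+1}$ in the statement; they differ by the factor $2m+2$. This cannot be repaired by bookkeeping, because the statement as printed is false, and your own proposed base-case check at $s=-2$ exposes it: there $\sum_{n=0}^{2}p_n(-2)=1-1+\frac13=\frac13$ and $\sum_{k\ge 1}\frac{\zeta^{(k)}(-2)}{k!}\left(\frac{2^k}{3}-1\right)=\frac13\zeta(0)-\zeta(-1)=-\frac{1}{12}$, so the stated right-hand side equals $\frac13-\frac13-\frac1{12}=-\frac1{12}\neq -\frac13$, whereas using $\frac{1}{(2m+1)(2m+2)}=\frac1{12}$ in place of $\frac13$ gives $-\frac13$ as required. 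Carrying out your plan therefore proves a corrected theorem, not the printed one.

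The odd case breaks in a second, independent place that your folding step runs into. You want $\zeta(-2m-1)\sum_{n} p_n(-2m-1)$ to be absorbed as the $k=0$ term of the series, but in the statement that series' inner sum starts at $n=1$, so the fold drops the $n=0$ piece $\zeta(-2m-1)\,p_0(-2m-1)=\zeta(-2m-1)\neq 0$. Accounting for this and for the residue constant $\frac{p_{2m+1}(-2m-1)}{2m+3}=\frac{-1}{(2m+2)(2m+3)}$ (not the stated $-\frac{1}{2m+2}$), the correct specialization reads
\[
-\frac{1}{2m+2} = \frac{-1}{(2m+2)(2m+3)} + \zeta(-2m-1) - \sum_{n=0}^{2m+1} p_n(-2m-1) +\sum_{k=0}^\infty \frac{\zeta^{(k)}(-2m-1)}{k!}\sum_{n=1}^{2m+1} n^k p_n(-2m-1),
\]
which checks out at $m=0$: $-\frac16-\frac1{12}-\frac12+\frac14=-\frac12$. (The printed display also has the typo $(-2m+j)$ where $(-2m-1+j)$ is meant in its first product.) So your approach is sound and is the paper's approach, but the proposal as written asserts it will terminate in the stated identities, which it cannot; you should either prove the corrected constants or explicitly flag that Theorem~\ref{-m} as printed is inconsistent with the identity it is derived from, and the numerical check you wisely planned is precisely the step that detects this.
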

\qed

\subsection{Applying $G$ to Dirichlet $L$-functions}\label{secL}
Corollary \ref{cor} can be reframed in terms of the operator $G$ so that the equation corresponding to (\ref{HurDE}) does not {\it only} hold formally. Furthermore, Van Gorder's original result may also be extended to provide an operator equation involving Dirichlet $L$-functions.  

\begin{proposition}
Let $G$ be the operator defined above. Then, for a Dirichlet character $\chi \mod k$, and for $s \in \C\setminus \{1\}$, we have that,
\begin{align}
G\left[k^s L(s,\chi) - \sum_{r=1}^k \chi (r) \frac{k^s}{r^s} \right] = \frac{k^{s-1}}{s-1}\sum_{r = 1}^k  \frac{\chi(r)}{r^{s-1}}
\end{align}
\end{proposition}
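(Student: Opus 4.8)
The plan is to reduce the statement to the already-established Hurwitz-zeta identity by decomposing the Dirichlet $L$-function into a finite combination of Hurwitz zeta functions and then invoking the $G$-version of Corollary \ref{cor}. First I would record the two algebraic ingredients. The first is the standard decomposition
$$k^s L(s,\chi) = \sum_{r=1}^k \chi(r)\,\zeta\!\left(s,\tfrac{r}{k}\right),$$
obtained by splitting $\sum_{n\ge 1}\chi(n)n^{-s}$ into residue classes $n = qk+r$ with $1\le r\le k$ and $q\ge 0$, and using periodicity $\chi(qk+r)=\chi(r)$; this holds for $\mathrm{Re}(s)$ large and extends to all $s\in\C\setminus\{1\}$ by analytic continuation. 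The second ingredient is the trivial observation $\frac{k^s}{r^s} = \frac{1}{(r/k)^s}$, so that, writing $a_r := r/k \in (0,1]$,
$$k^s L(s,\chi) - \sum_{r=1}^k \chi(r)\frac{k^s}{r^s} = \sum_{r=1}^k \chi(r)\left(\zeta(s,a_r) - \frac{1}{a_r^{\,s}}\right).$$

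Next I would reframe Corollary \ref{cor} in terms of $G$. Rearranging the corollary and reading it against the definition $G[f](s)=\sum_{n=0}^\infty p_n(s)f(s+n)$ shows that for $0<a\le 1$ and $s\in\C\setminus\{1\}$,
$$G\!\left[\zeta(\cdot,a)-\frac{1}{a^{\cdot}}\right]\!(s) = \frac{1}{(s-1)a^{s-1}},$$
the $n=0$ term (where $p_0=1$) supplying $\zeta(s,a)-a^{-s}$ and the remaining terms matching the series in the corollary. Applying $G$ to the bracketed function above and using linearity — pulling the finite sum over $r$ out of the defining series of $G$ — I would then obtain
$$G\!\left[k^{\cdot}L(\cdot,\chi)-\sum_{r=1}^k\chi(r)\frac{k^{\cdot}}{r^{\cdot}}\right]\!(s) = \sum_{r=1}^k \chi(r)\,G\!\left[\zeta(\cdot,a_r)-\frac{1}{a_r^{\cdot}}\right]\!(s) = \sum_{r=1}^k \chi(r)\,\frac{1}{(s-1)a_r^{s-1}}.$$
Substituting $a_r = r/k$ turns each summand into $\frac{1}{(s-1)(r/k)^{s-1}} = \frac{k^{s-1}}{(s-1)r^{s-1}}$, and factoring out $\frac{k^{s-1}}{s-1}$ gives the claimed right-hand side.

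I expect the only genuine subtlety to be the interchange of the finite sum over $r$ with the infinite sum defining $G$, together with confirming that each $a_r = r/k$ lies in $(0,1]$ so that Corollary \ref{cor} applies term by term — the case $r=k$, i.e. $a=1$, recovering the Riemann-zeta setting of Van Gorder. This interchange is routine: a finite sum always commutes with convergent series, and the convergence of each inner series $\sum_{n\ge 0} p_n(s)\big(\zeta(s+n,a_r)-a_r^{-(s+n)}\big)$ is guaranteed for all $s\in\C$ by Lemma \ref{absconv} (including the well-definedness at shifted points where $s+n=1$, handled by the pole-cancellation remark in that proof). Consequently there is no hard analytic step remaining beyond this bookkeeping, and the result follows by linearity and the $G$-form of Corollary \ref{cor}.
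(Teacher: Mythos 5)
Your proposal is correct and follows essentially the same route as the paper's proof: both rest on the decomposition $L(s,\chi) = k^{-s}\sum_{r=1}^k \chi(r)\,\zeta(s,r/k)$, the interchange of the finite sum over $r$ with the series defining $G$ (justified by the absolute convergence in Lemma \ref{absconv}), and the $G$-form of Corollary \ref{cor} applied with $a_r = r/k \in (0,1]$. The only difference is cosmetic ordering --- you decompose the input function before applying $G$, while the paper expands $G$ first and substitutes the identity afterward.
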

\begin{proof}
Let $\chi$ be a Dirichlet character. From the definition of $G$,
\begin{align}\label{26}
    G\left[ k^s L(s,\chi) - \sum_{r=1}^k \chi (r) \frac{k^s}{r^s} \right] = \sum_{n=0}^\infty p_n (s) \left(k^s L(s+n,\chi) - \sum_{r=1}^k \chi (r) \frac{k^{s+n}}{r^{s+n}} \right)
\end{align}
But we know that, for $\chi$ a character mod $k$,
\begin{align}\label{Lid}
    L(s,\chi) = k^{-s}\sum_{r = 1}^k \chi(r) \zeta (s, r/k)
\end{align}
Thus, (\ref{26}) becomes,
\begin{align*}
    &\sum_{n=0}^\infty p_n (s) \left(k^s k^{-s}\sum_{r = 1}^k \chi(r) \zeta (s+n, r/k) - \sum_{r=1}^k \chi (r) \frac{k^{s+n}}{r^{s+n}} \right)\\
    &= \sum_{n=0}^\infty p_n (s) \left[\sum_{r = 1}^k \left( \chi(r) \zeta (s+n, r/k) - \chi (r) \frac{k^{s+n}}{r^{s+n}} \right)\right]\\
    &= \sum_{n=0}^\infty \sum_{r = 1}^k \left[ \chi(r) p_n (s) \left( \zeta (s+n, r/k) - \frac{k^{s+n}}{r^{s+n}} \right)\right]
\end{align*}
Now, since
$
\sum_{n=0}^\infty  p_n (s) \left( \zeta (s+n, r/k) - \frac{k^{s+n}}{r^{s+n}}\right)
$
converges absolutely by Lemma \ref{absconv}, we can change the order of summation to get,
\begin{align}\label{32}
    &\sum_{r = 1}^k \chi(r) \sum_{n=0}^\infty \left[ p_n (s) \left( \zeta (s+n, r/k) - \frac{k^{s+n}}{r^{s+n}} \right)\right]
\end{align}
But, by Corollary 2, we have that the last summation equals $\frac{k^{s-1}}{(s-1)r^{s-1}}$ for $r = 1,..,k$ and (\ref{32}) becomes
\begin{align*}
    \sum_{r = 1}^k \chi(r)\frac{k^{s-1}}{(s-1)r^{s-1}} = \frac{k^{s-1}}{s-1}\sum_{r = 1}^k  \frac{\chi(r)}{r^{s-1}}
\end{align*}
as desired.
\end{proof}

This gives us an identity of Dirichlet $L$-functions. Namely, for $\chi$ a character mod $k$, we have
$$L(s,\chi) - \sum_{r=1}^k  \frac{\chi (r)}{r^{s}} = \frac{1}{k(s-1)}\sum_{r = 1}^k \frac{\chi(r)}{r^{s-1}}  - \frac{1}{k^s} \sum_{n=1}^\infty \frac{\prod_{j=0}^{n-1}(s+j)}{(n+1)!} \left(k^{s+n} L(s+n,\chi) - \sum_{r=1}^k \chi (r) \frac{k^{s+n}}{r^{s+n}} \right)$$

\subsection{Relating $G^{-1}$ to the Hurwitz $\zeta$ and to Dirichlet $L$-functions}\label{inverse}

In \cite{VanGorder}, Van Gorder defines an inverse operator to $T$. We see in his proof of Theorem 4.1 that Van Gorder's definition of $T^{-1}$ was the inverse operator for $T$ he did not use the definition of $\exp(nD)$ but rather the characterization that it is a shift operator. Thus, we may use this construction to define an inverse operator to $G$, which we denote $G^{-1}$. Let $f$ be a complex valued function. Then $G^{-1}$ is given by
\begin{align}\label{inverse}
    G^{-1}[f](s) = \sum_{n=1}^\infty q_n(s)f(s+n)
\end{align}
Where $q_0(s) = 1$ and $q_n(s) = -\sum_{k=0}^{n-1}q_k(s)p_{n-k}(s+k)$. The formal proof that this operator is the inverse of $G$ is given by Van Gorder in the proof of Theorem 4.1 in \cite{VanGorder}. However, this argument does not give reference to where this inverse converges.  We will address this question in Section \ref{Ginvcon}, but first we will make clear the relationship between $G$ and the Bernoulli numbers which Van Gorder alluded to in \cite{VanGorder}. 

\section{Relationship to Bernoulli numbers}\label{Bern}
In his paper, Van Gorder alludes to a relationship between the $q_n(s)$ and Bernoulli numbers. We now provide a proof of such relationship.

\begin{proposition}\label{13}
For all integers $n \geq 0$ and all $s \in \C$, we have the identity
\begin{align*}
    q_n(s) = \frac{B_n}{n!}\prod_{j=0}^{n-1} (s+j)
\end{align*}
Where $B_n$ denotes the $n^{\text{th}}$ Bernoulli number.
\end{proposition}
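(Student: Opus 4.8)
The plan is to prove the identity by strong induction on $n$, using the recursive definition $q_0(s)=1$, $q_n(s) = -\sum_{k=0}^{n-1} q_k(s)\,p_{n-k}(s+k)$ together with the defining recurrence for the Bernoulli numbers. The base case $n=0$ is immediate: $q_0(s)=1=B_0$, and the claimed formula evaluates to $\frac{B_0}{0!}$ times the empty product, which is also $1$.

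For the inductive step I would assume the formula holds for every index below $n$ and substitute it into the recurrence. Using the hypothesis $q_k(s) = \frac{B_k}{k!}\prod_{j=0}^{k-1}(s+j)$ and the definition $p_{n-k}(s+k) = \frac{1}{(n-k+1)!}\prod_{j=0}^{n-k-1}(s+k+j)$ (valid since $1\le n-k\le n$ throughout the sum), the two products telescope:
$$\prod_{j=0}^{k-1}(s+j)\cdot\prod_{j=0}^{n-k-1}(s+k+j) = \prod_{j=0}^{n-1}(s+j).$$
This factor is independent of $k$ and can be pulled out of the sum, leaving
$$q_n(s) = -\left(\prod_{j=0}^{n-1}(s+j)\right)\sum_{k=0}^{n-1}\frac{B_k}{k!\,(n-k+1)!}.$$

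It then suffices to show that the scalar coefficient equals $\frac{B_n}{n!}$. Multiplying the target equality $-\sum_{k=0}^{n-1}\frac{B_k}{k!\,(n-k+1)!} = \frac{B_n}{n!}$ through by $(n+1)!$ and recognizing $\frac{(n+1)!}{k!\,(n-k+1)!} = \binom{n+1}{k}$, this is equivalent to $-\sum_{k=0}^{n-1}\binom{n+1}{k}B_k = (n+1)B_n$; moving the $k=n$ term (with $\binom{n+1}{n}=n+1$) to the left gives $\sum_{k=0}^{n}\binom{n+1}{k}B_k = 0$. This is precisely the standard defining recurrence for the Bernoulli numbers valid for $n\ge 1$, which follows from comparing coefficients of $t^{n+1}$ in $t = (e^t-1)\sum_{m\ge 0}B_m t^m/m!$.

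The argument is thus essentially bookkeeping; the two points to get right are the telescoping of the products and the translation of the resulting binomial sum into the Bernoulli recurrence. The only genuine subtlety is the sign convention: the recurrence above pins down $B_1=-\tfrac12$, so I would sanity-check the statement against the small case $q_1(s) = -p_1(s) = -\tfrac{s}{2} = B_1 s$ to confirm that the intended convention is the generating-function one $\frac{t}{e^t-1}=\sum_n B_n t^n/n!$.
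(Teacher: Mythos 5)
Your proposal is correct and follows essentially the same route as the paper's proof: strong induction on $n$, telescoping the two products into $\prod_{j=0}^{n-1}(s+j)$, and reducing the scalar sum via $\binom{n+1}{k}$ to the standard Bernoulli recurrence $\sum_{k=0}^{n}\binom{n+1}{k}B_k = 0$. Your closing sanity check on the sign convention ($B_1 = -\tfrac{1}{2}$, i.e.\ the $\frac{t}{e^t-1}$ generating function) is a nice touch consistent with the recurrence the paper invokes.
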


\begin{proof}
We proceed with strong induction on $n$.

For $n=0$, $q_0(s) = 1 = \frac{B_0}{0!}$. 
Suppose that we proved our claim for all $0 \leq k \leq n-1$ for some $n-1 \geq 0$. We prove it for $n$. By the strong induction hypothesis and by the definition of $p_n(s)$, we have that, for all $s \in \C$

\begin{align*}
    q_{n}(s) &= -\sum_{k=0}^{n-1} q_k(s)p_{n-k}(s+k) \\
                &= -\sum_{k=0}^{n-1} \left[ \left( \frac{B_k}{k!}\prod_{j=0}^{k-1}(s+j) \right) \left( \frac{1}{(n+1-k)!}  \prod_{j=0}^{n-k-1} (s+k+j)\right) \right] \\
                &=  -\sum_{k=0}^{n-1} \left[ \left( \frac{B_k}{k!(n+1-k)!}\prod_{j=0}^{k-1}(s+j) \right)  \prod_{j=k}^{n-1} (s+j)\right] \\
                &=  -\sum_{k=0}^{n-1} \left[ \frac{B_k}{k!(n+1-k)!}\prod_{j=0}^{n-1}(s+j) \right]\\
                &=  \left(\prod_{j=0}^{n-1}(s+j) \right)\left(-\sum_{k=0}^{n-1} \frac{B_k}{k!(n+1-k)!} \right) \\
                &= \left(\prod_{j=0}^{n-1}(s+j) \right)\left(-\sum_{k=0}^{n-1} {n+1 \choose k}\frac{B_k}{(n+1)!} \right)
\end{align*}
Since the Bernoulli numbers have the recursive formula $B_0 = 1$ and $(n+1)B_n = -\sum_{k=0}^{n-1} {n+1 \choose k}B_k$ for $n > 0$, we conclude that
\begin{align*}
    q_n(s) &= \frac{1}{(n+1)!}\left(\prod_{j=0}^{n-1}(s+j) \right)\left(-\sum_{k=0}^{n-1} {n+1 \choose k}B_k \right) \\
    &=  \frac{(n+1)}{(n+1)!}B_n \prod_{j=0}^{n-1}(s+j) \\
    &= \frac{B_n}{n!}\prod_{j=0}^{n-1}(s+j)
\end{align*}
This completes our induction.
\end{proof}

Proposition \ref{13} gives a surprising connection between $G$ and Bernoulli numbers. We now proceed to use $G^{-1}$ to recover series representations of $\zeta(\cdot,a)$ and $L(\cdot,\chi)$.

\subsection{Using $G^{-1}$ to Represent the Hurwitz $\zeta$-function and Dirichlet $L$-functions}\label{Ginvcon}

Using the fact that $G^{-1}$ is an inverse operator to $G$, formally we have
\begin{align}\label{hurid}\zeta(s,a) - \frac{1}{a^s} = G^{-1} \left[ \frac{1}{(s-1)a^{s-1}}\right] =\sum_{n=0}^\infty \frac{q_n(s)}{(s+n-1)a^{s+n-1}} = \sum_{n=0}^\infty \left(\frac{B_n}{n!}\cdot \frac{\prod_{j=0}^{n-1}(s+j)}{(s+n-1)a^{s+n-1}} \right)\end{align}
and
\begin{align}\label{dlid}
k^s L(s,\chi) - \sum_{r=1}^k \chi (r) \frac{k^s}{r^s} = G^{-1}\left[\frac{k^{s-1}}{s-1}\sum_{r = 1}^k  \frac{\chi(r)}{r^{s-1}}\right] = \sum_{n=0}^\infty \frac{B_n\prod_{j=0}^{n-1}(s+j)}{n!}\left(\frac{k^{s+n-1}}{s+n-1}\sum_{r = 1}^k  \frac{\chi(r)}{r^{s+n-1}} \right) 
\end{align}
We now treat the convergence of these identities. Namely we will show that they only converge at negative integers.

\begin{proposition} The sums
$$\sum_{n=0}^\infty \frac{B_n}{n!}\cdot \frac{\prod_{j=0}^{n-1}(s+j)}{(s+n-1)a^{s+n-1}} 
\ \ \ \ \text{ and } \ \ \ \ 
\sum_{n=0}^\infty \frac{B_n\prod_{j=0}^{n-1}(s+j)}{n!}\left(\frac{k^{s+n-1}}{s+n-1}\sum_{r = 1}^k  \frac{\chi(r)}{r^{s+n-1}} \right) $$ diverge for all $s\in \mathbb{C}\setminus \mathbb{Z}_{\leq 0}$ and converge when $s= -M\in\Z_{\leq 0}$ giving 
$$G^{-1}[f](-M)=\sum_{n=1}^M (-1)^n B_n \frac{M!}{n! (M-n)!} f(-M+n).$$ \end{proposition}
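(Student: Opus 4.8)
The plan is to prove both divergence and the limiting value by exploiting the Bernoulli-number formula $q_n(s) = \frac{B_n}{n!}\prod_{j=0}^{n-1}(s+j)$ from Proposition \ref{13}. The key structural observation is that the product $\prod_{j=0}^{n-1}(s+j)$ terminates (becomes eventually zero) exactly when $s$ is a nonpositive integer, since then one of the factors $s+j$ vanishes. This is what separates the two cases of the statement, so I would organize the proof around whether $s \in \Z_{\leq 0}$ or not.

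Let me think about what actually needs to happen here. The factor $\prod_{j=0}^{n-1}(s+j)$ — let me reconsider. When $s = -M$ with $M \geq 0$ an integer, the factors are $(-M)(-M+1)\cdots(-M+n-1)$, which hits zero once $n-1 \geq M$, i.e. $n \geq M+1$. So all terms with $n > M$ vanish and the series is a finite sum, hence convergent. When $s \notin \Z_{\leq 0}$, no factor vanishes, so every term is nonzero and I must show the terms do not go to zero (in fact grow), forcing divergence.

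**For the convergence case** $s = -M \in \Z_{\leq 0}$: I would substitute $s = -M$ directly into $\sum_{n=0}^\infty q_n(-M) f(-M+n)$ and observe the sum truncates at $n = M$. The $n=0$ term has $q_0 = 1$ but $f(-M+0) = f(-M)$ sits at the pole of the argument (the $\frac{1}{s-1}$-type factor is fine, but one should check the $n=0,1$ terms carefully against the stated formula, which begins at $n=1$). I would then simplify $q_n(-M) = \frac{B_n}{n!}\prod_{j=0}^{n-1}(-M+j)$. The finite product $\prod_{j=0}^{n-1}(-M+j) = (-1)^n \frac{M!}{(M-n)!}$ for $1 \leq n \leq M$ (a routine rewriting of a falling factorial with a sign), which yields
\begin{align*}
q_n(-M) = (-1)^n B_n \frac{M!}{n!(M-n)!},
\end{align*}
matching the claimed closed form once I absorb it into $G^{-1}[f](-M) = \sum_{n=1}^M q_n(-M) f(-M+n)$.

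**For the divergence case** $s \notin \Z_{\leq 0}$: the main obstacle is that $B_n = 0$ for all odd $n \geq 3$, so a naive term-growth argument fails on half the terms; I must argue along the even indices. Using the classical asymptotics $|B_{2n}| \sim \frac{2(2n)!}{(2\pi)^{2n}}$, the even-indexed term behaves like $\frac{|B_{2n}|}{(2n)!}\bigl|\prod_{j=0}^{2n-1}(s+j)\bigr| \cdot |f(s+2n)|$, where $\frac{|B_{2n}|}{(2n)!} \sim \frac{2}{(2\pi)^{2n}}$ while $\bigl|\prod_{j=0}^{2n-1}(s+j)\bigr| = \bigl|\frac{\Gamma(s+2n)}{\Gamma(s)}\bigr|$ grows factorially (like $(2n)!$ up to lower-order factors). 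Since factorial growth dominates the geometric decay $(2\pi)^{-2n}$, and the remaining $f$-factor is at worst polynomially bounded for both of the stated integrands (the $a^{-(s+n-1)}$ and $k^{s+n-1}/(s+n-1)$ pieces contribute only geometric factors that lose to the factorial), the even-indexed terms tend to infinity. Hence the necessary condition for convergence fails and the series diverges. The one delicate point to verify is that the geometric factors from $a^{-n}$ or $k^n$ do not conspire to cancel the factorial growth; I would handle this by noting $a, 1/k$ are fixed constants while the Stirling/Gamma growth is super-exponential, so for all such $s$ the comparison still forces divergence.
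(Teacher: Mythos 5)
Your proposal is correct and follows essentially the same route as the paper: both rest on the Bernoulli formula of Proposition \ref{13}, the vanishing of the odd-index Bernoulli numbers (forcing the argument onto even indices), the asymptotic $|B_{2n}|/(2n)! \sim 2/(2\pi)^{2n}$ (which the paper extracts from Euler's formula for $\zeta(2n)$), and, at $s=-M$, the truncation of the sum at $n=M$ together with the rewriting $\prod_{j=0}^{n-1}(-M+j) = (-1)^n M!/(M-n)!$. The only difference is cosmetic: the paper exhibits the blow-up of the even-indexed terms through a ratio test, $|c_{n+1}/c_n| \sim |(s+2n)(s+2n-1)|/(2\pi a)^2 \to \infty$, while you estimate the term size directly via the factorial growth of $\left|\Gamma(s+2n)/\Gamma(s)\right|$ against the geometric factors; both arguments conclude by the failure of the term test.
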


\begin{proof}
By Euler's formula, for each $k\in \Z_{>0}$ 
$$\zeta(2k) =(-1)^{k+1}\frac{(2\pi)^{2k}B_{2k}}{2(2k)!}$$ 
where $B_{2k}$ is the $2k$-th Bernoulli number. Since $\lim_{k\to\infty}\zeta(k)  = 1$, 
$$\left|\frac{(2\pi)^{2k}B_{2k}}{2(2k)!}
\right|\sim 1$$
and so 
$$\left|\frac{B_{2k}}{(2k)!}
\right|\sim \frac{2}{(2\pi)^{2k}}.$$

Now note that 
\begin{align*} \sum_{n=0}^\infty \frac{B_n}{n!}\cdot \frac{\prod_{j=0}^{n-1}(s+j)}{(s+n-1)a^{s+n-1}} 
&=\sum_{n=0}^\infty \frac{B_{2n}}{(2n)!}\cdot \frac{\prod_{j=0}^{2n-1}(s+j)}{(s+2n-1)a^{s+2n-1}} \\
& =\sum_{n=0}^\infty \frac{B_{2n}}{(2n)!}\cdot \frac{\prod_{j=0}^{2n-2}(s+j)}{a^{s+2n-1}} =: \sum_{n=0}^\infty c_n
\end{align*}
since the $2n+1$ Bernoulli numbers are $0$. 
Now 
\begin{align*}
\left|\frac{c_{n+1}}{c_n}\right|
&=\left| \frac{B_{2n+2}\cdot \prod_{j=0}^{2n}(s+j)}{a^{s+2n+1}(2n+2)!}\cdot  \frac{a^{s+2n-1}(2n)!}{B_{2n}\cdot \prod_{j=0}^{2n-2}(s+j)}\right|\\
& \sim\left| \frac{2\cdot \prod_{j=0}^{2n}(s+j)}{a^{s+2n+1}(2\pi)^{2n+2}}\cdot  \frac{a^{s+2n-1}(2\pi)^{2n}}{2\cdot \prod_{j=0}^{2n-2}(s+j)}\right|\\
&=\left| \frac{ (s+2n)(s+2n-1)}{(2\pi a)^{2}}\right|\to \infty
\end{align*}
as $n\to \infty$.

Similarly \begin{align*} \sum_{n=0}^\infty \frac{B_n\prod_{j=0}^{n-1}(s+j)}{n!}\left(\frac{k^{s+n-1}}{s+n-1}\sum_{r = 1}^k  \frac{\chi(r)}{r^{s+n-1}} \right)
&=\sum_{n=0}^\infty \frac{B_{2n}}{(2n)!}\cdot \prod_{j=0}^{2n-2}(s+j)\cdot k^{s+2n-1}\sum_{r = 1}^k  \frac{\chi(r)}{r^{s+2n-1}} \\
&=: \sum_{n=0}^\infty d_n
\end{align*}
and
\begin{align*}
\left|\frac{d_{n+1}}{d_n}\right|
&=\left| \frac{B_{2n+2}\cdot \prod_{j=0}^{2n}(s+j)k^{s+2n+1} }{(2n+2)!}\sum_{r = 1}^k  \frac{\chi(r)}{r^{s+2n+1}} \cdot  \frac{(2n)!}{B_{2n}\cdot \prod_{j=0}^{2n-2}(s+j)\cdot k^{s+2n-1}\sum_{r = 1}^k  \frac{\chi(r)}{r^{s+2n-1}}}\right|\\
& \sim\left| \frac{2\cdot \prod_{j=0}^{2n}(s+j)k^{2}}{(2\pi)^{2n+2}}\cdot  \frac{(2\pi)^{2n}}{2\cdot \prod_{j=0}^{2n-2}(s+j)}\sum_{r = 1}^k  \frac{\chi(r)}{r^{s+2n+1}}\left(\sum_{r = 1}^k  \frac{\chi(r)}{r^{s+2n-1}}\right)^{-1} \right|\\
&\sim\left| \frac{ (s+2n)(s+2n-1)}{(2\pi )^{2}} \right|\to \infty
\end{align*} and $n\to \infty$

To see that these sums converge for $s\in \Z_{\leq 0}$, let $s = -M$ for some $M\in \Z_{\geq 0}$, and notice that for all $n> M$,
\[ q_n(s) = q_n(-M) = \frac{B_n}{n!} \prod_{j=0}^{n-1} (-M+j) = 0  \]
since there will be a $-M+M$ term in the product. It follows that
\begin{align*}
G^{-1}[f](-M) &= \sum_{n=1}^M q_n(-M) f(-M+n) \\
&= \sum_{n=1}^M \frac{B_n}{n!}(-M)\cdot (-M+1)\cdot ...\cdot (-M+n-1) \cdot f(-M+n)\\
&=  \sum_{n=1}^M (-1)^n B_n \frac{M!}{n! (M-n)!} f(-M+n) \,.
\end{align*}
\end{proof}

\subsection{Euler-Maclaurin Formula and Analytic Continuation of Dirichlet $L$-functions}\label{sec:EM}

Notice that, for the Hurwitz zeta function, the coefficients in the sum are the same as the coefficients in the Euler-Maclaurin summation formula (which rarely a convergent series for all $s \in \C$). 
For Dirichlet $L$-functions, however, there is no clear way to apply the Euler-Maclaurin summation formula. One can, however, derive a series representation of Dirichlet $L$-functions from the Euler-Maclaurin summation formula for the Hurwitz zeta using the identity in (\ref{Lid}), to get for $\chi$ a character mod $k$,
\begin{align}\label{emid}
    L(s,\chi) = k^{-s}\sum_{r=1}^k \chi(r)\left[  \frac{k^s}{r^s}+ \sum_{n=0}^\infty \left(\frac{B_n}{n!}\cdot \frac{k^{s+n-1}\prod_{j=0}^{n-1}(s+j)}{r^{s+n-1}(s+n-1)} \right) \right]
\end{align}
(which is a rearrangement of (\ref{dlid})). 
The proof we give does not follow as given since it relied on he convergence of the series above. However, we can still give a proof of the meromorphic continuations of $L(s,\chi)$ using Euler-Maclaurin series.  

The Euler-Maclaurin formula says that if $f$ and all of its derivatives go to zero as $x\to \infty$, 
$$\sum_{n= a}^\infty f(n) =\int_a^\infty f(x)\, dx +\frac{1}{2}f(a) -\sum_{\ell = 2}^k\frac{(-1)^\ell}{\ell !}
f^{(\ell -1)}(a)B_\ell - \frac{(-1)^k}{k!}\int_a^{\infty}f^{(k)}(x)\psi_k(x)\,dx$$
where $\psi_k(x) =B_k(\{x\})$ for $B_k$ the Bernouli polynomials and  $\{x\}$ the fractional part of $x$. Note that the {\it Bernouli polynomials} $B_k(x)$ are defined by the following three properties: 
\begin{enumerate}
    \item $B_0(x)=1$
    \item $B_k'(x) = kB_{k-1}(x)$ for $x=1,2,\dots$ 
    \item $\int_0^1 B_k(x)\, dx=0$  for $x=1, 2, \dots$
\end{enumerate}

For $\text{Re}(s)>1$,
$$\int_0^\infty (x+a)^{-s}\,dx 
=\frac{(x+a)^{-s+1}}{-s+1}\Big|_{x=0}^\infty = \frac{a^{-s+1}}{s-1}
$$
For the Hurwitz zeta function, for $\text{Re}(s)>1$, the Euler-Maclaurin formula gives 
\begin{align*}
    \zeta(s,a)& = \int_0^\infty (x+a)^{-s}\,dx +\frac{1}{2a^s}
    -\sum_{\ell =2 }^k \frac{(-1)^\ell}{\ell !}
f^{(\ell -1)}(a)B_\ell - \frac{(-1)^k}{k!}\int_0^{\infty}f^{(k)}(x)\psi_k(x)\,dx\\
&=  \frac{a^{-s+1}}{s-1} +\frac{1}{2a^s}
    -\sum_{\ell =2 }^k \frac{(-1)^\ell}{\ell !}
(-1)^{\ell -1}\prod_{j=1}^{\ell-2}(s+j) a^{-s-\ell +1}B_\ell \\
& \ \ \ \ \ \ \ \ \ \ \ \ \ \ \ - \frac{(-1)^k}{k!}\int_0^{\infty}(-1)^k \prod_{j=0}^{k-1}(s+j)(x+a)^{-s-k}\psi_k(x)\,dx\\
&=  \frac{a^{-s+1}}{s-1} +\frac{1}{2a^s}
    +\sum_{\ell =2 }^k \frac{1}{\ell !}
\prod_{j=1}^{\ell-2}(s+j) a^{-s-\ell +1}B_\ell - \frac{1}{k!}\int_0^{\infty} \prod_{j=0}^{k-1}(s+j)(x+a)^{-s-k}\psi_k(x)\,dx
\end{align*}

 Recall that when $\chi$ is a Dirichlet character modulo $q$, $L(s,\chi)= q^{-s} \sum_{r=1}^q\chi(r)\zeta(s,r/q)$.  Thus for $\text{Re}(s)>1$, 
\begin{align}\label{eq:LEM} L(s,\chi)&= q^{-s} \sum_{r=1}^q\chi(r)\Big(\frac{(r/q)^{-s+1}}{s-1} +\frac{1}{2(r/q)^s}
    +\sum_{\ell =2 }^k \frac{1}{\ell !}
\prod_{j=1}^{\ell-2}(s+j) (r/q)^{-s-\ell +1}B_\ell\\
& \ \ \ \ \ \ \ \ \ \ \  \ \ \  \ \ \  \ \ \  \ \ \  \ \ \  - \frac{1}{k!}\int_0^{\infty} \prod_{j=0}^{k-1}(s+j)(x+(r/q))^{-s-k}\psi_k(x)\,dx\Big)\nonumber
 \end{align}
 
 \begin{proposition} For $\chi$ a Dirichlet character modulo $q$,  $L(s,\chi)$ defined by (\ref{eq:LEM}) for $\text{Re}(s)>1$ can be analytically continued to $\C-\{1\}$ where it has a simple pole at $s=1$.\end{proposition}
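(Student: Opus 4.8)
The plan is to exploit the structure of the right-hand side of (\ref{eq:LEM}) as a function of $s$ for each fixed truncation parameter $k$, observing that this right-hand side continues to make sense on a half-plane strictly larger than $\{\text{Re}(s)>1\}$. Fixing an integer $k\geq 1$, I would analyze (\ref{eq:LEM}) term by term (with $a=r/q$): the rational term $\frac{(r/q)^{-s+1}}{s-1}$ is meromorphic on all of $\C$ with a single simple pole at $s=1$; the term $\frac{1}{2}(r/q)^{-s}$ and the finite sum $\sum_{\ell=2}^k \frac{1}{\ell!}\prod_{j=1}^{\ell-2}(s+j)(r/q)^{-s-\ell+1}B_\ell$ are entire in $s$, being finite combinations of exponentials and polynomials in $s$; and the outer factor $q^{-s}$ is entire and nonvanishing. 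The only delicate term is the remainder integral.

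The key step will be to show that the integral $\int_0^\infty \prod_{j=0}^{k-1}(s+j)(x+r/q)^{-s-k}\psi_k(x)\,dx$ defines a holomorphic function of $s$ on the half-plane $\{\text{Re}(s)>1-k\}$. For convergence I would use that $\psi_k(x)=B_k(\{x\})$ is periodic and hence bounded, say $|\psi_k(x)|\leq M_k$, so the integrand is dominated by $M_k\,|\prod_{j=0}^{k-1}(s+j)|\,(x+r/q)^{-\text{Re}(s)-k}$, which is integrable on $[0,\infty)$ precisely when $\text{Re}(s)+k>1$. For holomorphy I would check that this bound is uniform on compact subsets of $\{\text{Re}(s)>1-k\}$, so the integral converges locally uniformly; since the integrand is holomorphic in $s$ for each fixed $x$, a standard argument (Morera's theorem together with Fubini, or differentiation under the integral sign) then shows the integral is holomorphic on $\{\text{Re}(s)>1-k\}$. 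I expect this to be the main obstacle, as every other term in the expression is manifestly elementary.

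Granting holomorphy of the remainder, for each fixed $k$ the full right-hand side of (\ref{eq:LEM}) is a meromorphic function on $\{\text{Re}(s)>1-k\}$ whose only singularity is the simple pole at $s=1$, and it agrees with $L(s,\chi)$ on $\{\text{Re}(s)>1\}$ by construction. By the identity theorem these continuations agree on overlapping half-planes as $k$ increases, and since the half-planes $\{\text{Re}(s)>1-k\}$ exhaust $\C$, the family patches together into a single well-defined meromorphic continuation of $L(s,\chi)$ to all of $\C\setminus\{1\}$.

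Finally I would determine the behavior at $s=1$. Since the entire terms and the integral remainder contribute no pole, the singularity comes only from the rational terms, giving
$$\lim_{s\to 1}(s-1)L(s,\chi) = q^{-1}\sum_{r=1}^q \chi(r)(r/q)^{-s+1}\Big|_{s=1} = \frac{1}{q}\sum_{r=1}^q \chi(r),$$
so $L(s,\chi)$ has at most a simple pole at $s=1$ with this residue. I would then remark that for the principal character this residue equals $\varphi(q)/q\neq 0$, yielding a genuine simple pole, whereas for a nonprincipal character the orthogonality relation $\sum_{r=1}^q \chi(r)=0$ forces the residue to vanish, making the singularity removable.
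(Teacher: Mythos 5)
Your proof is correct and follows essentially the same route as the paper's: use the boundedness of the periodic Bernoulli function $\psi_k$ to show the remainder integral converges and is holomorphic for $\text{Re}(s)>1-k$, observe all other terms are entire apart from the rational term's simple pole, and patch the continuations across the exhausting half-planes. Your closing residue computation is in fact sharper than the paper's statement, which glosses over the fact that for non-principal $\chi$ the residue $\frac{1}{q}\sum_{r=1}^q \chi(r)$ vanishes, so the singularity at $s=1$ is removable and only the principal character yields a genuine pole.
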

 
 \begin{proof}
 For $k=1$, (\ref{eq:LEM}) becomes  
 \begin{align*} L(s,\chi)&= q^{-s} \sum_{r=1}^q\chi(r)\Big(\frac{(r/q)^{-s+1}}{s-1} +\frac{1}{2(r/q)^s}
     - s\int_0^{\infty} (x+(r/q))^{-s-1}\psi_1(x)\,dx\Big)
 \end{align*}
 when $\text{Re}(s)>1$.  Rearranging, we get 
  \begin{align}\label{eq:mero} L(s,\chi)-q^{-s} \sum_{r=1}^q\chi(r)\frac{(r/q)^{-s+1}}{s-1} 
  &= q^{-s} \sum_{r=1}^q\chi(r)\Big(\frac{1}{2(r/q)^s}
     - s\int_0^{\infty} (x+(r/q))^{-s-1}\psi_1(x)\,dx\Big)
 \end{align}
 Since $|\psi_1(x)|\leq 1/2$,
 $$\Big|\int_0^{\infty} (x+(r/q))^{-s-1}\psi_1(x)\,dx\Big|
 \leq \int_0^{\infty} (x+(r/q))^{-s-1}\,dx$$ and this last integral is finite when $\text{Re}(s)>0$. Using the right side of (\ref{eq:mero}) to define the left side of (\ref{eq:mero}) for $0<\text{Re}(s)\leq 1$.  We see that $L(s,\chi)$ has a simple pole at $s=1$ and is analytic for all other points of $\text{Re}(s)>0$. 
 
To extend this $\text{Re}(s)<0$, note that $\psi_k$ is bounded and periodic on $\R$ with period 1 and equal to $B_k(x)$ on $[0,1)$. Thus the integral on right side  of  (\ref{eq:LEM}) is convergent for all $\text{Re}(s)+k>1$ and holomorphic for $\text{Re}(s)>1-k$. By repeating the argument above we have analytically continued $L(s,\chi)$ for $\text{Re}(s)>1-k$ for all $k=1,2,3,\dots$.
 \end{proof}

\section{Approximations}\label{approx}

In Theorem \ref{main}, we establish that $T\left[ \zeta (s, a) - \frac{1}{a^s} \right]=\sum_{n=0}^{\infty} p_n(s)\exp(nD)\left[\zeta(s,a)- \frac{1}{a^s}\right]$ diverges for $s\in \C $ so clearly it is not the case that 
$T\left[\zeta (s,a) - \frac{1}{a^s}\right] $ converges pointwise to $\frac{1}{(s-1)a^{s-1}}$ for $s\in \C $.  However, it may be the case that truncating $T$ may provide a good approximation even at values where the series does not converge.  
Of course, when considering the operator $T=\sum_{n=0}^\infty p_n(s) [id + \sum_{k=1}^\infty \frac{n^k}{k!} D_s^k]$, there are two sums that we may consider truncating:  the sum over $n$ and the sum over $k$.  In what follows we will truncate in $n$.


Consider 
$$T_N(s,a):=\sum_{n=0}^{N} p_n(s)\exp(nD)\left[\zeta(s,a)- \frac{1}{a^s}\right]$$ Note that though $T$ does not converge when applied to the zeta function, $T_N$ may converge when applied to $\zeta(s,a)$.
Recall that from Proposition \ref{taylordiv}, for each $s$ there is some $N'$ so that $\exp(N'\cdot D) \left[\zeta(s,a)-\frac{1}{a^s} \right]$ diverges.  However, from the Taylor series expansion,
$$    \zeta(s+n,a) - \frac{1}{a^{s+n}} = \sum_{k = 0}^\infty \frac{D_s^k \left( \zeta(s,a) - \frac{1}{a^s}\right)}{k!}n^k $$
  converges pointwise for $|(s+n)-s|=|n|<|s-1|$ since $\zeta(z,a)$ has a pole at $z=1$.
  Thus we have 
\begin{align*}T_N(s,a):=\sum_{n=0}^{N} p_n(s)\exp(nD)\left[\zeta(s,a)- \frac{1}{a^s}\right] =\sum_{n=0}^{N} p_n(s)\left[\zeta(s+n,a)- \frac{1}{a^{s+n}}\right]=:G_N(s,a)\end{align*} for $N<|s-1|$. In other words, in the region of convergence (away from $s=1$), the truncation of $T$ in $n$ is equal to the truncation of the shift operator $G$.

In what follows we will use complex plots to examine whether $T_N(s,a)$ is good approximation to $\frac{1}{(s-1)a^{s-1}}$ for $s$ in the region of convergence for $T_N$.
 Figures \ref{fig:approx1} and  \ref{fig:approx2} are obtained using ``complex$\_$plot" in Sage.  This function takes a complex function of one variable, $f(z)$ and plots output of the function over the specified x$\_$range and y$\_$range. The magnitude of the output is indicated by the brightness (with zero being black and infinity being white) while the argument is represented by the hue.  The hue of red is positive real, and increasing through orange, yellow, as the argument increases and the hue of green is positive imaginary. Note that, for simplicity, both figures only plot the specific case of the Riemann zeta function (when $a=1$).

Figure \ref{fig:approx1} (a) is the complex plot of $\frac{1}{s-1}$ the right side of Van Gorder's equation (\ref{zetaDE}). Subfigures (b), (c), (d) and (e) of approximations $G_N(s,1)$ of the left side of Van Gorder's equation (\ref{zetaDE}) for $N=1, 10, 50$ and $100$. Note that the domain of the plots in subfigures (d) and (e) has been expanded.  




%

\begin{figure}
\begin{center}
\begin{subfigure}{0.49\textwidth}
  \includegraphics[width=\textwidth]{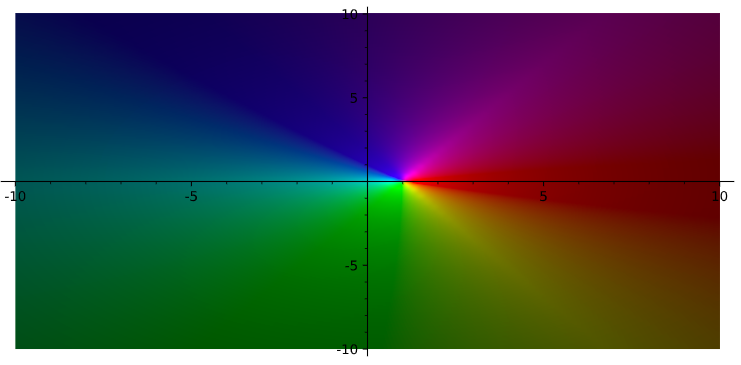}
    \caption{$\frac{1}{s-1}$  }
  \end{subfigure}
  
\begin{subfigure}{0.49\textwidth}
  \includegraphics[width=\textwidth]{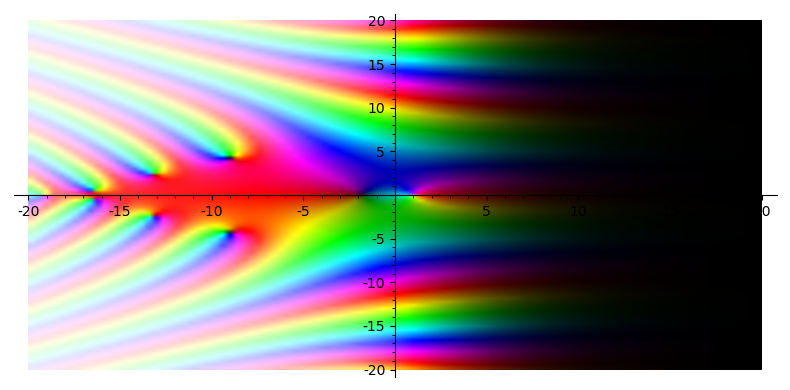}
  \caption{ $ G_1(s,1)$}
  \end{subfigure}
 \begin{subfigure}{0.49\textwidth}
  \includegraphics[width=\textwidth]{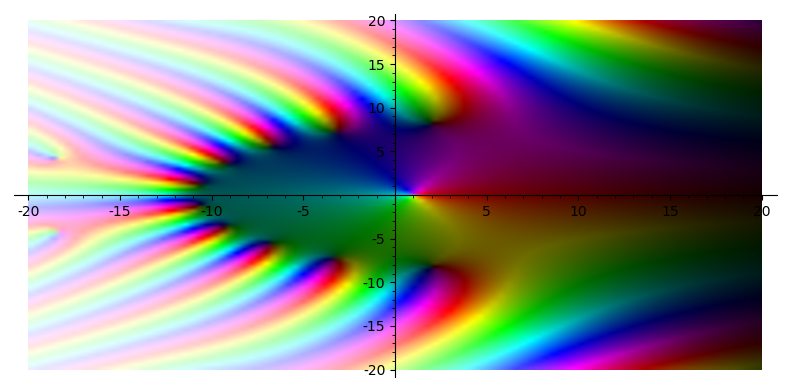}
    \caption{$G_{10}(s,1)$}
    \end{subfigure}

        \begin{subfigure}{0.49\textwidth}
  \includegraphics[width=\textwidth]{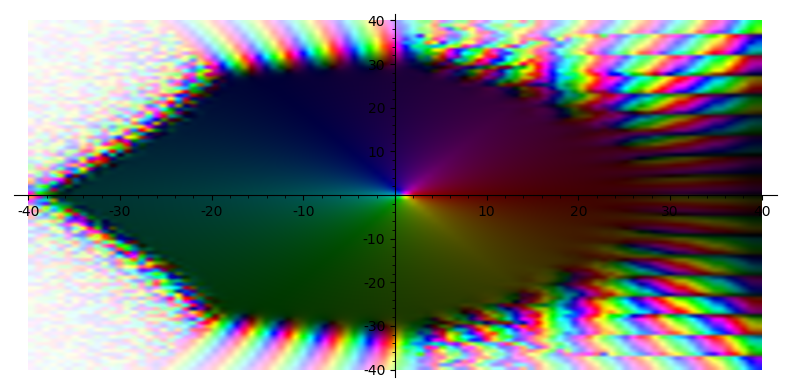}
    \caption{$G_{50}(s,1)$}
    \end{subfigure}
          \begin{subfigure}{0.49\textwidth}
  \includegraphics[width=\textwidth]{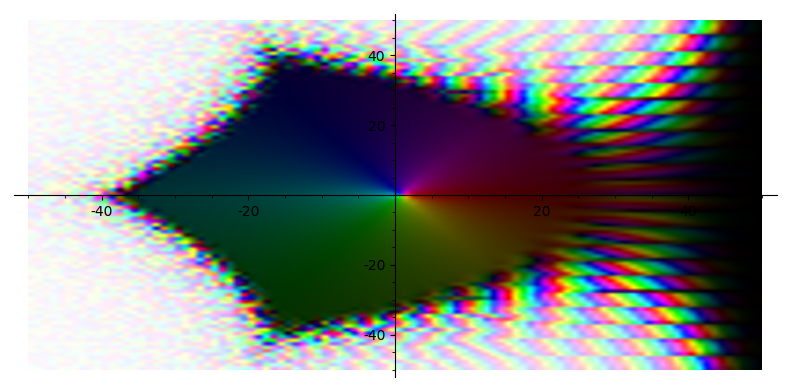}
    \caption{$G_{100}(s,1)$}
        \end{subfigure}

\end{center}
  \caption{Complex plots of $\frac{1}{s-1}$ and $G_N(s,1)$ for $N=1, 10, 50,$ and $100$. Note that the domains for (a), (b) and (c) are $(-20,20)$ on both the real and imaginary axes and for (d) and (e) is $(-40,40)$.}\label{fig:approx1}
\end{figure}

We see in Figure \ref{fig:approx1} 
that the first term of the expansion $G_N(s,1)$ is a good approximation of $\frac{1}{s-1}$ near the singularity $s=1$. (Note: This is not surprising since as $N\to \infty$, we have shown $G_N(s,1)\to G[\zeta(s)-1] = \frac{1}{s-1}$.) We also see from Figure \ref{fig:approx1} that as $N$ grows, the region on which $G_N(s,1)$ is a good approximation of $\frac{1}{s-1}$ also expands.  

As we observed at the beginning of this section, $T_N=G_N$ away from $s=1$ so $T_N(s,a)$ may be a good approximation to $\frac{1}{(s-1)a^{s-1}}$ for $s$ far enough from 1 (i.e. outside the circle $|s-1|=N$).  One might wonder whether the expanding region on which $G_N(s,1)$ is a good approximation of $\frac{1}{s-1}$ will break into the region of convergence of $T_N(s,1).$

Figure \ref{fig:approx2} contains complex plots of $G_N(s,1)-\frac{1}{s-1}$ for $N=10, 50$ and $100$ along with the circle $|s-1|=N$ for $N=10, 50$ and $100$.  The inclusion of this circle in plot is to be able to identify where where $G_N(s,1)= T_N(s,1)$ (outside $|s-1|=N$).

\begin{figure}
\begin{center}

     \begin{subfigure}{0.31\textwidth}
  \includegraphics[width=\textwidth]{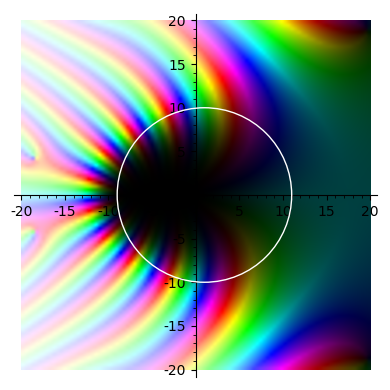}
    \caption{$N=10$ }
    \end{subfigure}     
     \begin{subfigure}{0.31\textwidth}
  \includegraphics[width=\textwidth]{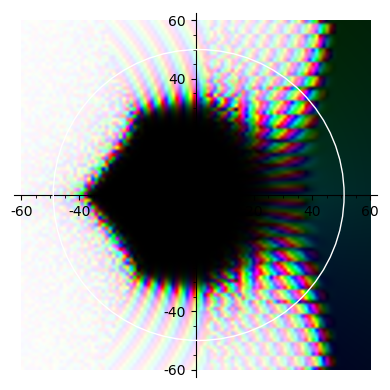}
    \caption{$N=50$}
    \end{subfigure}
        \begin{subfigure}{0.31\textwidth}
  \includegraphics[width=\textwidth]{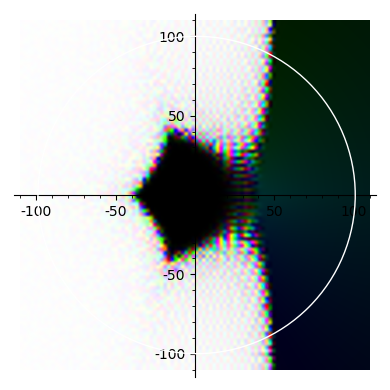}
    \caption{$N=100$}
    \end{subfigure}

\end{center}
    \caption{Complex plots of $G_N(s,1)-\frac{1}{s-1}$ for $N=10, 50,$ and $100$ as well as the circle $|s-1|=N$ for $N=10, 50,$ and $100$  (in white) which shows the region for which for $T_N$ converges and equal $G_N$.}\label{fig:approx2}

  \end{figure}

Examining Figure \ref{fig:approx2}, we see that this ``region of good approximation" is not expanding as quickly as the region of convergence for $T_N$ (the radius of $|s-1|=N$). Thus, as $N$ grows, $T_N(s,1)$ actually seems to be a less reasonable approximation for $\frac{1}{s-1}$.

Many natural questions remain about both the accuracy of the approximation $T_N$ in this region and the accuracy of the approximation $G_N$ in the disk and the rate of convergence of $G$.

\section{Acknowledgements}
We would like to thank Dr.\,Paul Young for calling our attention to our mistake with what was previously Section 4.1 and for his helpful comments. K. K-L. acknowledges support from NSF Grant number DMS-2001909.

\section{Conclusion}

Using the operator $G$ as opposed to $T$ allows us to provide more than formal justification for the differential equation (\ref{zetaDE}) as well as the corresponding generalizations to the Hurwitz zeta function and Dirichlet $L$-function. However, it is important to note that $G$ is not a differential operator and so, in fact, this does not provide support for their being a non-algebraic differential equation which zeta satisfies. 




\end{document}